\newtheorem{theorem}{Theorem}[section]
\newtheorem{corollary}[theorem]{Corollary}
\newtheorem{lemma}[theorem]{Lemma}
\newtheorem{proposition}[theorem]{Proposition}
\newtheorem{remark}[theorem]{Remark}
\def\11{\textbf{$1$}}
\begin{document}

\title{Tingley's problem for $p$-Schatten von Neumann classes}

\author[F.J. Fern\'{a}ndez-Polo]{Francisco J. Fern\'{a}ndez-Polo}

\author[E. Jord\'a]{Enrique Jord\'a}

\address{Escuela Polit\'ecnica Superior de Alcoy, IUMPA, Universitat Polit\'ecnica de Valencia, Plaza Ferr\'andiz y Carbonell 1, 03801 Alcoy, Spain}
\email{ejorda@mat.upv.es}

\author[A.M. Peralta]{Antonio M. Peralta}

\address[F.J. Fern\'{a}ndez-Polo, A.M. Peralta]{Departamento de An{\'a}lisis Matem{\'a}tico, Facultad de
Ciencias, Universidad de Granada, 18071 Granada, Spain.}
\email{pacopolo@ugr.es}
\email{aperalta@ugr.es}


\subjclass[2010]{Primary 47B49, Secondary 46A22, 46B20, 46B04, 46A16, 46E40, .}

\keywords{Tingley's problem; extension of isometries; $p$-Schatten von Neumann operators}

\date{}

\begin{abstract} Let $H$ and $H'$ be a complex Hilbert spaces. For $p\in(1, \infty)\backslash\{2\}$ we consider the Banach space $C_p(H)$ of all $p$-Schatten von Neumann operators, whose unit sphere is denoted by $S(C_p(H))$. We prove that every surjective isometry $\Delta: S(C_p(H))\to S(C_p(H'))$ can be extended to a complex linear or to a conjugate linear surjective isometry $T:C_p(H)\to C_p(H')$.
\end{abstract}

\maketitle
\thispagestyle{empty}

\section{Introduction}

Tingley's problem has lured a multitude of researchers interested in determining if this problem admits a positive solution in the general setting or in some particular classes of Banach spaces. Given a Banach space $X$, the symbol $S(X)$ will stand for the unit sphere of $X$. D. Tingley proved in \cite{Ting1987} that, for finite dimensional Banach spaces $X$ and $Y$, a surjective isometry $\Delta :S(X)\to S(Y)$ satisfies $\Delta (-x) = - \Delta(x),$ for every $x\in S(X)$. Tingley's theorem gives rise to the so-called Tingley's problem, which can be considered as a generalization of the Mazur-Ulam theorem. The problem studied nowadays can be settled in the following terms: Suppose $\Delta : S(X) \to S(Y)$ is a surjective isometry between the unit spheres of two arbitrary Banach spaces $X$ and $Y$. Does $\Delta$ extend to a real linear isometry from $X$ onto $Y$? The question remains open when $X$ and $Y$ are arbitrary 2-dimensional Banach spaces.\smallskip

The achievements obtained during the thirty years of history around Tingley's problem can be hardly resumed in one or two paragraphs. Part of the most relevant results to place Tingley's problem in its true historical perspective for our purposes include positive answers for surjective isometries between the unit spheres of $\ell_p (\Gamma)$ spaces with $1\leq p \leq \infty$ \cite{Ding2002,Di:p,Di:8,Di:1}. In the setting of commutative structures positive solutions to Tingley's problem have been also established for spaces of measurable functions of the form $L^{p}(\Omega, \Sigma, \mu),$ where $(\Omega, \Sigma, \mu)$ is a $\sigma$-finite measure space and $1\leq p\leq \infty$ \cite{Ta:8, Ta:1, Ta:p}, and spaces of continuous functions \cite{Wang}. Some of these spaces actually satisfy a stronger property, the \emph{Mazur-Ulam property}. We briefly recall that a Banach space $X$ satisfies the Mazur-Ulam property if for every Banach space $Y$, Tingley's problem admits a positive solution for every surjective isometry $\Delta : S(X) \to S(Y)$. Real sequence spaces like $c (\Gamma,\mathbb{R})$, $c_{0} (\Gamma,\mathbb{R})$, and $\ell_{\infty} (\Gamma,\mathbb{R})$ satisfy the Mazur-Ulam property. The spaces $C(K,\mathbb{R})$, $L^{p}((\Omega, \Sigma, \mu), \mathbb{R})$ also have this property (see \cite{Liu2007, FangWang06, Ta:1,Ta:8} and \cite{Ta:p}). The results in the recent papers \cite{JVMorPeRa2017,Pe2017} show that the spaces of complex sequences $c_0(\Gamma)$ and $\ell_{\infty}(\Gamma)$ also satisfy the Mazur-Ulam property.\smallskip

The commutative triplet $(c_0,c_0^*=\ell_1,\ell_1^*= \ell_{\infty})$ admits a non-commutative analogue of the form $(K(H),K(H)^*=C_1(H),C_1(H)^*=B(H)),$ where $H$ is a complex Hilbert space and $K(H),$ $C_1(H),$ and $B(H)$ are the spaces of compact, trace class, and bounded linear operators on $H$, respectively. R. Tanaka gave a positive solution to Tingley's problem for surjective isometries between the unit spheres of two finite von Neumann algebras \cite{Tan2016,Tan2017,Tan2017b}. R. Tanaka and the third author of this note found a complete solution to Tingley's problem for surjective isometries between the unit spheres of two $K(H)$ spaces or between two compact C$^*$-algebras (see \cite{PeTan16}). J. Garc{\'e}s, I. Villanueva in collaboration with the first and third author of this note solved the problem for the space of trace class operators \cite{FerGarPeVill17}. Additional solutions to Tingley's problem for $B(H)$ spaces, atomic von Neumann algebras and JBW$^*$-triples, and general von Neumann algebras are due to the first and third author of this note in \cite{FerPe17,FerPe17b,FerPe17c}, and \cite{FerPe17d}. The most recent achievement in this line is a result by M. Mori, which establishes that a surjective isometry between the unit spheres of two von Neumann algebra preduals admits a unique extension to a surjective real linear isometry between the corresponding spaces \cite{Mori2017}. We refer to the surveys \cite{Ding2009,Pe2018,YangZhao2014} for a detailed overview on Tingley's problem.\smallskip

During a talk presented by the third author of this note in the Conference on Non-Linear Functional Analysis held at the Universitat Polit\`{e}cnica de Valencia (Spain) in 2017, Professor Andreas Defant asked whether Tingley's problem admits a positive solution for the spaces, $C_p(H),$ of $p$-Schatten von Neumann operators on a complex Hilbert space $H$. By the non-commutative Clarkson-McCarthy inequalities, the space $C_p(H)$ is uniformly convex for every $1<p<\infty$ {\rm(}compare \cite{McCarthy67}{\rm)}, and hence strictly convex. Every point in $S(C_p(H))$ is an extreme point of the closed unit ball of $C_p(H)$. In particular, the unit sphere of $C_p(H)$ contains no segments. This is just one of the reasons due to which the usual techniques applied in the different solutions to Tingley's problem presented in the forerunners surveyed above are useless in this particular setting. No answer to Tingley's problem has been established for this non-commutative generalizations of $\ell_p$ spaces, this is the aim of this paper.\smallskip

In this note we present a complete solution to Tingley's problem for surjective isometries between the unit spheres of two $p$-Schatten von Neumann spaces for every $p\in (1,\infty)$ (see Theorem \ref{t Tingley's for Cp with 2<p}). In order to present our results, let the symbol $\mathcal{U}_{min} (H)$ stand for the set of all minimal partial isometries in $C_p(H)$. In our arguments we first establish that, given two complex Hilbert spaces $H$ and $H'$, $1< p <\infty$, $p\neq 2$, and a surjective isometry  $\Delta: S(C_p(H))\to S(C_p(H'))$, then $\Delta$ maps minimal partial isometries in $C_p(H)$ to minimal partial isometries in $C_p(H')$, that is, $\Delta (\mathcal{U}_{min} (H))= \mathcal{U}_{min} (H')$ (see Proposition \ref{p minimal partial isometries in arbitrary dimension}). Consequently, the restriction $\Delta|_{_{\mathcal{U}_{min} (H)}}: \mathcal{U}_{min} (H)\to \mathcal{U}_{min} (H')$ is a surjective isometry (see Corollary \ref{c minimal partial isometries in arbitrary dimension}). Several technical results are established to determine that $\Delta$ or a certain composition of $\Delta$ with a conjugation preserves the transition probabilities between elements in $\mathcal{U}_{min} (H)$. These technical results provide the appropriate conditions to apply a generalization of Wigner's theorem established by L. Moln{\'a}r in \cite{Mol2002}.\smallskip

Our strategy is completed with a generalization of a result obtained by G. Nagy. Let $S(C_p(H)^+)$ denote the unit sphere of positive operators in $C_p(H)$. In \cite{Nag2013} G. Nagy proves that every surjective isometry $\Delta : S(C_p(H)^+)\to S(C_p(H)^+)$ admits a unique extension to a surjective complex linear or a conjugate linear isometry on $C_p(H)$. In \cite[Lemma in page 3]{Nag2013} Nagy proves the following: Let $H$ be a finite dimensional complex Hilbert space, $1<p< \infty$ and $1\leq \gamma$. If $a,b\in S(C_p(H)^+)$ satisfy $\|a-\gamma p\|_p = \|b-\gamma p\|_p$ for every minimal projection $p\in C_p(H)^+$, then $a = b$. Our generalization is another identity principle established in Proposition \ref{p a la Nagy}, where we prove that if $a,b\in S(C_p(H))$ satisfy $\|a-\gamma e\|_p = \|b-\gamma e\|_p$ for every $e\in\mathcal{U}_{min} (H)$, then $a = b$. These are the main tools leading to our main result.

\section{The results}

Let $H$ be a complex Hilbert space. We are interested in different subclasses of the space
$K(H)$ of all compact operators on $H$. We briefly recall the basic terminology.
For each compact operator $a$, the operator $a^* a$ lies in $K(H)$ and admits a
unique square root $|a| = (a^* a)^{\frac12}$. The \emph{singular values}
of the operator $a$
are precisely the eigenvalues of $|a|$ arranged in decreasing order
and repeated according
to multiplicity. Since $|a|$ belong to $K(H)$, only an at most countable number of
its eigenvalues are greater than zero.
Accordingly to the standard terminology, we usually write $\sigma_n (a)$ for the $n$-th
singular value of $a$. It is well known that $(\sigma_n (a))_n\to 0$.\smallskip

Given $1\leq p<\infty$ the class $C_p(H)$ is the set of all $a$ in $K(H)$
such that $$\hbox{tr}(|a|^p) = \|a\|_p^p := \left(\sum_{n=1}^{\infty} |\sigma_n (a)|^p \right) <\infty.$$
We set $\|a\|_{\infty} =\|a\|$, where the latter stands for the operator norm of $a$. The set $C_p(H)$ is a two-sided ideal
in the space $B(H)$ of all bounded linear operators on $H$, and $(C_p(H), \|.\|_p)$ is a Banach algebra. $C_2(H)$ is the class of Hilbert-Schmidt operators, $C_1(H)$ is the space of trace class operators, and $C_p(H)$ is the space of $p$-Schatten von Neumann operators. If tr$(.)$ denotes the usual trace on $B(H)$ and $a\in K(H)$, we know that $a\in C_1(H)$ if, and only if, tr$(|a|)<\infty$ and $\|a\|_1 = \hbox{tr} (|a|)$.  It is further known that the predual of $B(H),$ and the dual of $K(H),$ both can be identified with $C_1(H)$ under the isometric linear mapping $a\mapsto  \varphi_a$, where $\varphi_a (x) := \hbox{tr} (a x)$ ($a\in C_1(H), x\in B(H)$).\smallskip

An element $e\in B(H)$ is a partial isometry if $ee^*$ (equivalently, $e^*e$) is a projection, or equivalently, if and only if $ee^*e=e$.
It is known that every element $a$ in $C_p(H)$ can be written as a (possibly finite) sum
\begin{equation}\label{eq spectral resolution in trace class}
a= \sum_{n=1}^{\infty} \lambda_n \eta_n \otimes \xi_n,
\end{equation} where $(\lambda_n)\subset \mathbb{R}_0^{+}$,
$(\xi_n)$, $(\eta_n)$ are orthonormal systems in $H$,  and
$\displaystyle \|a\|_p^p=\sum_{n=1}^{\infty} \lambda_n^p$.\smallskip

Given an element $a$ in $C_p(H)$ written in the form given in \eqref{eq spectral resolution in trace class}, the element $$s(a) = \sum_{n=1}^{\infty} \eta_n \otimes \xi_n,$$ is a partial isometry in $B(H)$ (called the support partial isometry of $a$ in $B(H)$).\smallskip

We refer to \cite{McCarthy67}, \cite[Chapter III]{GohbergKrein}, \cite[\S 9]{DunSchw63}, \cite[Chapter II]{Tak} and \cite[\S 1.15]{S} for the basic results and references on $C_p(H)$ spaces.\smallskip

The non-commutative Clarkson-McCarthy inequalities state that the formulae
\begin{equation}\label{Clarkson-McCarthy ineq 0pleq2} 2^{p-1} \left( \|a\|_p^p + \|b\|_p^p\right) \leq \|a+ b\|_p^p+ \|a-b\|_p^p \leq 2 \left( \|a\|_p^p + \|b\|_p^p \right) \ \ (0<p\leq 2);
\end{equation}
\begin{equation}\label{Clarkson-McCarthy ineq 2p infty} 2 \left( \|a\|_p^p + \|b\|_p^p \right) \leq \|a+ b\|_p^p+ \|a-b\|_p^p \leq 2^{p-1} \left( \|a\|_p^p + \|b\|_p^p \right) \ \ (2\leq p < \infty),
\end{equation} hold for all $a,$ $b$ in $C_p(H)$  (cf. \cite[Theorem 2.7]{McCarthy67}). It is further known that if $p\in [1,\infty)\backslash\{2\}$,  equality $$\|a+ b\|_p^p+ \|a-b\|_p^p = 2 \left( \|a\|_p^p + \|b\|_p^p \right)$$ holds in \eqref{Clarkson-McCarthy ineq 0pleq2} or in \eqref{Clarkson-McCarthy ineq 2p infty} if and only if $(a^* a) (b^* b)=0$. Since $\|c\|_p= \|c^*\|_p$ for every $c\in C_p(H)$ (see \cite[Theorem 1.3]{McCarthy67}), it follows that the previous equalities hold if and only if $a$ and $b$ are orthogonal as elements in $B(H)$ ($a\perp b$ in short), that is $a b^* =0 =b^* a$, or in other words $s(a)$ and $s(b)$ are orthogonal partial isometries in $B(H)$ (i.e. $s(a) s(b)^*=0$ and $s(b)^* s(a)=0$). Consequently, for $1\leq p <\infty$, $p\neq 2$, if we fix $a,b\in S(C_p(H)),$ we can conclude that \begin{equation}\label{eq orthogonality in Cp} \|a\pm b\|_p^p = 2 \Longleftrightarrow a\perp b \hbox{ (in $C_p(H)$)} \Longleftrightarrow s(a)\perp s(b) \hbox{ (in $B(H)$).}
\end{equation}

Let us recall a technical result due to G.G. Ding (see \cite{Ding2002}).

\begin{lemma}\label{l Ding strictly convex}\cite[Lemma 2.1]{Ding2002} Let $X$ and $Y$ be normed spaces.
Suppose $X$ is strictly convex, and $\Delta: S(X) \to S(Y)$ is a mapping.
If $-\Delta(S(X)) \subset \Delta(S(X))$ and
$$\| \Delta(x_1) - \Delta(x_2)\| \leq  \| x_1 - x_2\|\ \ \forall x_1, x_2 \in S(X),$$
then $\Delta$ is one-to-one, and $\Delta(-x) = -\Delta(x)$ for all $x\in S(X)$.$\hfill\Box$
\end{lemma}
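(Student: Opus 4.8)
The plan is to derive both conclusions from a single elementary consequence of strict convexity: in a \emph{strictly convex} normed space $X$, whenever $u, v \in X \setminus \{0\}$ satisfy $\|u + v\| = \|u\| + \|v\|$, the vectors $u$ and $v$ are positively proportional; in particular two unit vectors $u, v \in S(X)$ with $\|u+v\| = 2$ must coincide (apply this to $u/\|u\|$ and $v/\|v\|$ together with the fact that the unit sphere of a strictly convex space contains no nontrivial segment). The heart of the argument is, for a given $x \in S(X)$, to manufacture a point whose image under $\Delta$ equals $-\Delta(x)$, and then to read off from the non-expansiveness of $\Delta$ that this point is forced to be $-x$; injectivity follows by the same device.

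First I would prove $\Delta(-x) = -\Delta(x)$ for every $x \in S(X)$. Fix $x \in S(X)$. Since $-\Delta(S(X)) \subset \Delta(S(X))$, there exists $y \in S(X)$ with $\Delta(y) = -\Delta(x)$. Then
$$2 = 2\|\Delta(x)\| = \|\Delta(x) - (-\Delta(x))\| = \|\Delta(x) - \Delta(y)\| \leq \|x - y\| \leq \|x\| + \|y\| = 2,$$
so all the inequalities are equalities; in particular $\|x + (-y)\| = \|x\| + \|-y\|$. By strict convexity $-y = \lambda x$ with $\lambda > 0$, and comparing norms gives $\lambda = 1$, i.e.\ $y = -x$. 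Hence $\Delta(-x) = \Delta(y) = -\Delta(x)$.

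It remains to check that $\Delta$ is one-to-one. Suppose $x_1, x_2 \in S(X)$ satisfy $\Delta(x_1) = \Delta(x_2)$. Applying the previous step to $x_2$ we get $\Delta(-x_2) = -\Delta(x_2) = -\Delta(x_1)$, so
$$2 = 2\|\Delta(x_1)\| = \|\Delta(x_1) - \Delta(-x_2)\| \leq \|x_1 - (-x_2)\| = \|x_1 + x_2\| \leq 2,$$
forcing $\|x_1 + x_2\| = 2 = \|x_1\| + \|x_2\|$; strict convexity again yields $x_1 = x_2$.

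The argument has essentially no hard step. The only points requiring care are the elementary characterization of strict convexity invoked twice above, and the order of the two claims: antipodality must be established first so that it can be fed into the injectivity argument (alternatively, one can bypass this by choosing, for $\Delta(x_1)=\Delta(x_2)$, a single $z \in S(X)$ with $\Delta(z) = -\Delta(x_1)$ and deducing $z = -x_1$ and $z = -x_2$ simultaneously).
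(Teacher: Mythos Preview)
Your proof is correct. The paper itself does not supply a proof of this lemma; it is quoted verbatim from \cite[Lemma~2.1]{Ding2002} and simply marked with a \hfill$\Box$, so there is no in-paper argument to compare against. Your argument is the standard one (and essentially the one in Ding's original paper): use the hypothesis $-\Delta(S(X))\subset \Delta(S(X))$ to find a preimage of $-\Delta(x)$, then force it to equal $-x$ via the equality case $\|x-y\|=2$ and strict convexity; injectivity then follows by the same trick. One minor remark: in both steps you only ever need the unit-vector form of strict convexity (if $u,v\in S(X)$ and $\|u+v\|=2$ then $u=v$), so the preliminary discussion of the general case $\|u+v\|=\|u\|+\|v\|$ is not strictly necessary.
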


We can now deduce a result similar to that obtained by Tingley in \cite{Ting1987} for surjective isometries from the unit sphere of $C_p(H)$ ($1<p<\infty$) onto the unit sphere of another normed space.

\begin{remark}\label{r strictly convex commutes with -1} It follows from the non-commutative
Clarkson-McCarthy inequalities that $C_p(H)$ is uniformly convex for
every $1<p<\infty$ {\rm(}compare \cite{McCarthy67}{\rm)}. It is known that every uniformly
convex space is strictly convex. Thus, given a normed space $Y,$ and a surjective isometry
$\Delta: S(C_p(H))\to S(Y)$, it follows from Lemma \ref{l Ding strictly convex} that $\Delta(-x) =-\Delta(x),$
for every $x\in S(C_p(H))$.
\end{remark}

%

Suppose $\{\xi_i\}_{i\in I}$ is an orthonormal basis of $H$. The elements in the set $\{\xi_i\otimes \xi_i: i \in I \}$ are mutually orthogonal in $C_p (H)$. Actually, the dimension of $H$ is precisely the cardinal of the biggest set of mutually orthogonal elements in $C_p(H)$.\smallskip

We can state now a non-commutative version of \cite[Lemma 3]{Di:1},
\cite[Lemma 3]{Di:p}, \cite[Lemma 3.7]{Ta:p}, and an extension of \cite[Lemma 2.2]{FerGarPeVill17} for $1<p<\infty$, $p\neq 2$.

\begin{lemma}\label{l preservation of orthogonality} Let $H$ and $H'$ be complex Hilbert spaces,
let $1\leq p <\infty$, $p\neq 2$, and let $\Delta: S(C_p(H))\to S(C_p(H'))$ be a surjective isometry.
Then $\Delta$ preserves orthogonal elements in both directions, that is, $a\perp b$ in $S(C_p(H))$ if and only if $\Delta(a)\perp \Delta(b)$ in $S(C_p(H'))$. In particular, dim$(H) = \hbox{dim} (H')$.
\end{lemma}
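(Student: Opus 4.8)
The plan is to reduce the statement to the metric description of orthogonality recorded in \eqref{eq orthogonality in Cp}: for $a,b\in S(C_p(H))$ one has $a\perp b$ if and only if $\|a+b\|_p^p=\|a-b\|_p^p=2$. Since this characterization is expressed purely through distances computed inside the unit sphere, it will transfer along $\Delta$ as soon as we know that $\Delta$ commutes with the antipodal map.

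First I would observe that both $\Delta$ and $\Delta^{-1}$ satisfy $\Delta(-x)=-\Delta(x)$. Indeed, $\Delta$ is a surjective isometry, so $-\Delta(S(C_p(H)))=-S(C_p(H'))=S(C_p(H'))=\Delta(S(C_p(H)))$; since $C_p(H)$ is strictly convex (Remark \ref{r strictly convex commutes with -1}), Lemma \ref{l Ding strictly convex} applies and yields $\Delta(-x)=-\Delta(x)$ for all $x\in S(C_p(H))$. The same argument applied to the surjective isometry $\Delta^{-1}\colon S(C_p(H'))\to S(C_p(H))$ gives $\Delta^{-1}(-y)=-\Delta^{-1}(y)$. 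Now let $a,b\in S(C_p(H))$ with $a\perp b$. By \eqref{eq orthogonality in Cp} we have $\|a-b\|_p^p=2$, hence $\|\Delta(a)-\Delta(b)\|_p^p=\|a-b\|_p^p=2$ because $\Delta$ is an isometry; and, using $\Delta(-b)=-\Delta(b)$,
$$\|\Delta(a)+\Delta(b)\|_p^p=\|\Delta(a)-\Delta(-b)\|_p^p=\|a-(-b)\|_p^p=\|a+b\|_p^p=2.$$
Thus $\|\Delta(a)\pm\Delta(b)\|_p^p=2$, and \eqref{eq orthogonality in Cp} in $C_p(H')$ forces $\Delta(a)\perp\Delta(b)$. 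Applying the same reasoning to $\Delta^{-1}$ establishes the converse implication, so $\Delta$ preserves orthogonality in both directions.

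For the statement on dimensions, recall that $\dim(H)$ equals the cardinality of the largest family of mutually orthogonal elements of $C_p(H)$, and after normalising we may assume such a family lies in $S(C_p(H))$. If $\{a_i\}_{i\in I}\subset S(C_p(H))$ is a family of mutually orthogonal elements, then by the first part, together with the injectivity of $\Delta$, the family $\{\Delta(a_i)\}_{i\in I}\subset S(C_p(H'))$ is mutually orthogonal and has the same cardinality; running the argument also for $\Delta^{-1}$ shows that $C_p(H)$ and $C_p(H')$ admit mutually orthogonal families of the same maximal cardinality, whence $\dim(H)=\dim(H')$. I do not expect a genuine obstacle here: the only delicate points are to make the antipodal identity available before manipulating $\|\Delta(a)+\Delta(b)\|_p$, and to note that the orthogonality characterization \eqref{eq orthogonality in Cp} holds on the whole range $1\le p<\infty$, $p\ne 2$; the dimension step is then routine cardinal bookkeeping.
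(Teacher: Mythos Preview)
Your proof is correct and follows essentially the same route as the paper: both reduce to the metric characterization \eqref{eq orthogonality in Cp}, invoke Remark \ref{r strictly convex commutes with -1} (via Lemma \ref{l Ding strictly convex}) to obtain $\Delta(-x)=-\Delta(x)$, and then compute $\|\Delta(a)\pm\Delta(b)\|_p^p=2$; the dimension statement is handled in both cases by the remark preceding the lemma about maximal orthogonal families. The only cosmetic difference is that you spell out the application to $\Delta^{-1}$ for the converse, whereas the paper leaves this implicit.
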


\begin{proof} Take $a,b$ in $S(C_p(H))$. We have already commented that $a\perp b$
if and only if $\|a\pm b\|_p^p = 2$ (compare \eqref{eq orthogonality in Cp}).
Since $\Delta$ is an isometry we deduce that $$\|\Delta(a)-\Delta(b)\|_p^p=2.$$

Remark \ref{r strictly convex commutes with -1} implies that $\Delta(-x) = -\Delta(x)$ for every $x\in  S(C_p(H))$,
and hence $$\| \Delta(a) + \Delta(b)\|_p^p = \| \Delta(a) - \Delta(-b)\|_p^p = \| a + b\|_p^p =2.$$
Therefore $\|\Delta(a) \pm \Delta(b)\|_p^p=2$,
and hence $\Delta(a) \perp \Delta(b)$.\smallskip

The final conclusion follows from the comments preceding this lemma.
\end{proof}

We recall that a partial isometry is called \emph{minimal} if it is a rank one partial isometry. The set of all minimal partial isometries in $C_p(H)$ will be denoted by $\mathcal{U}_{min} (H)$. Along the paper, $\mathbb{T}$ will stand for the unit sphere of $\mathbb{C}$.

\begin{proposition}\label{p minimal partial isometries in arbitrary dimension} Let $H$ and $H'$ be complex Hilbert spaces, let $1< p <\infty$, $p\neq 2$, and let $\Delta: S(C_p(H))\to S(C_p(H'))$ be a surjective isometry. Then the following statements hold:\begin{enumerate}[$(a)$]\item $\Delta$ maps minimal partial isometries in $C_p(H)$ to minimal partial isometries in $C_p(H')$. Furthermore, $\Delta (\mathcal{U}_{min} (H))= \mathcal{U}_{min} (H')$;
\item For each $e_0\in \mathcal{U}_{min} (H)$ we have $\Delta(\lambda e_0 ) = \lambda \Delta(e_0)$ or $\Delta(\lambda e_0 ) = \overline{\lambda} \Delta(e_0),$ for all $\lambda\in \mathbb{T}$;
\item For each $e_0\in \mathcal{U}_{min} (H)$ if  $\Delta(\mu e_0 ) = \mu \Delta(e_0)$ {\rm(}respectively, $\Delta(\mu e_0 ) = \overline{\mu} \Delta(e_0)${\rm)} for some $\mu\in\mathbb{T}\backslash\{\pm 1\}$, then  $\Delta(\lambda e_0 ) = \lambda \Delta(e_0)$ {\rm(}respectively, $\Delta(\lambda e_0 ) = \overline{\lambda} \Delta(e_0)${\rm),} for every $\lambda\in \mathbb{C}$ with $|\lambda|=1$;
\item Let $\{\eta_j: j\in J\}$ and $\{\xi_j: j\in J\}$ be orthonormal bases of $H$. Then there exist orthonormal bases $\{\widetilde{\eta}_j: j\in J\}$ and $\{\widetilde{\xi}_j: j\in J\}$ of $H'$ such that $\Delta (\eta_j\otimes \xi_j) = \widetilde{\eta}_j\otimes \widetilde{\xi}_j$ for all $j\in J$.
\end{enumerate}

\end{proposition}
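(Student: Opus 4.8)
The plan is to make everything hinge on the interaction of $\Delta$ with the orthogonality relation already furnished by Lemma~\ref{l preservation of orthogonality}. For a subset $S\subseteq S(C_p(H))$ put $S^{\perp}:=\{b\in S(C_p(H)): b\perp a \text{ for all }a\in S\}$ and $S^{\perp\perp}:=(S^{\perp})^{\perp}$. Since $\Delta$ is a bijection (Remark~\ref{r strictly convex commutes with -1} together with Lemma~\ref{l Ding strictly convex}) whose inverse is again a surjective isometry between the corresponding spheres, applying Lemma~\ref{l preservation of orthogonality} to $\Delta$ and to $\Delta^{-1}$ gives $\Delta(S^{\perp})=\Delta(S)^{\perp}$ for every $S$, and in particular $\Delta(\{a\}^{\perp\perp})=\{\Delta(a)\}^{\perp\perp}$. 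The case $\dim H=1$ is trivial (then $C_p(H)=\mathbb{C}$), so I assume $\dim H\geq 2$; by Lemma~\ref{l preservation of orthogonality} also $\dim H'\geq 2$. Throughout I use the convention $\eta\otimes\xi\colon x\mapsto \langle x,\xi\rangle\,\eta$, so that \eqref{eq spectral resolution in trace class} is a singular-value decomposition.

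For $(a)$ the heart of the matter is a purely metric description of $\mathcal{U}_{min}(H)$ inside $S(C_p(H))$. A direct computation with rank-one operators shows that for $e=\eta\otimes\xi\in\mathcal{U}_{min}(H)$ one has $b\perp e$ exactly when $\mathrm{ran}(b)\perp \eta$ and $b\xi=0$, whence $\{e\}^{\perp\perp}=\mathbb{T}e$. More generally, if $a\in S(C_p(H))$ has singular decomposition $a=\sum_n\lambda_n\,\eta_n\otimes\xi_n$, the same computation identifies $\{a\}^{\perp\perp}$ with the set of $c\in S(C_p(H))$ whose range lies in the closed linear span of $\{\eta_n\}$ and which vanish on the orthogonal complement of the closed linear span of $\{\xi_n\}$; as soon as $a$ is not of rank one this set contains the mutually orthogonal pair $\eta_1\otimes\xi_1$, $\eta_2\otimes\xi_2$. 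Hence $a\in S(C_p(H))$ is a minimal partial isometry if and only if $\{a\}^{\perp\perp}$ contains no two mutually orthogonal elements. This property is plainly transported by $\Delta$, since $\Delta$ is a bijection preserving orthogonality in both directions and $\Delta(\{a\}^{\perp\perp})=\{\Delta(a)\}^{\perp\perp}$; applying the characterization in $H$ and in $H'$ (the latter also via $\Delta^{-1}$) yields $\Delta(\mathcal{U}_{min}(H))=\mathcal{U}_{min}(H')$. I expect this to be the only genuinely delicate point of the whole proposition: the two orthogonality computations, and treating uniformly the finite-rank, infinite-rank, and (when $\dim H<\infty$) full-rank cases.

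For $(b)$, fix $e_0\in\mathcal{U}_{min}(H)$. From $\{e_0\}^{\perp\perp}=\mathbb{T}e_0$ we get $\Delta(\mathbb{T}e_0)=\{\Delta(e_0)\}^{\perp\perp}=\mathbb{T}\Delta(e_0)$, so there is a well-defined map $\mu\colon\mathbb{T}\to\mathbb{T}$ with $\Delta(\lambda e_0)=\mu(\lambda)\Delta(e_0)$; it is a bijection with $\mu(1)=1$, and since $|\mu(\lambda)-\mu(\lambda')| = \|\Delta(\lambda e_0)-\Delta(\lambda' e_0)\|_p = \|(\lambda-\lambda')e_0\|_p = |\lambda-\lambda'|$, it is a surjective isometry of the circle fixing $1$. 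Such a map is either the identity or complex conjugation (it extends to an orthogonal transformation of $\mathbb{C}\cong\mathbb{R}^2$, and the ones fixing $1$ are $z\mapsto z$ and $z\mapsto\overline{z}$), which is precisely the assertion. Statement $(c)$ is then immediate: if $\Delta(\mu e_0)=\mu\Delta(e_0)$ for some $\mu\in\mathbb{T}\backslash\{\pm1\}$, the conjugate-linear alternative in $(b)$ would force $\mu=\overline{\mu}$, a contradiction, so the linear alternative holds for all $\lambda\in\mathbb{T}$; the parenthetical statement is symmetric.

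For $(d)$, the operators $e_j:=\eta_j\otimes\xi_j$ ($j\in J$) are mutually orthogonal minimal partial isometries, so by $(a)$ and Lemma~\ref{l preservation of orthogonality} the operators $f_j:=\Delta(e_j)$ are mutually orthogonal minimal partial isometries in $C_p(H')$; write $f_j=\widetilde{\eta}_j\otimes\widetilde{\xi}_j$ with $\|\widetilde{\eta}_j\|=\|\widetilde{\xi}_j\|=1$. The rank-one orthogonality computation from $(a)$ shows that $f_i\perp f_j$ (for $i\neq j$) forces $\widetilde{\eta}_i\perp\widetilde{\eta}_j$ and $\widetilde{\xi}_i\perp\widetilde{\xi}_j$, so $\{\widetilde{\eta}_j\}_{j\in J}$ and $\{\widetilde{\xi}_j\}_{j\in J}$ are orthonormal systems in $H'$ of cardinality $|J|=\dim H=\dim H'$, hence orthonormal bases, and $\Delta(\eta_j\otimes\xi_j)=\widetilde{\eta}_j\otimes\widetilde{\xi}_j$ by construction. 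Once the metric characterization of minimality in $(a)$ is available, parts $(b)$–$(d)$ are short.
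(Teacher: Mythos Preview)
Your arguments for $(a)$, $(b)$, and $(c)$ are correct and in fact somewhat cleaner than the paper's: packaging everything through the operation $S\mapsto S^{\perp\perp}$ and the characterization ``$a$ is minimal iff $\{a\}^{\perp\perp}$ contains no orthogonal pair'' gives the same conclusion as the paper's explicit choice of a family $\{z_i\}$ with $\bigcap_i\{z_i\}^{\perp}=\mathbb{T}e$, and your isometry-of-the-circle argument for $(b)$ yields the global dichotomy directly, making $(c)$ immediate.

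However, your proof of $(d)$ has a genuine gap when $\dim H$ is infinite. From the fact that $\{\widetilde{\eta}_j\}_{j\in J}$ and $\{\widetilde{\xi}_j\}_{j\in J}$ are orthonormal systems of cardinality $|J|=\dim H'$ you conclude they are orthonormal bases; this inference is false in infinite dimensions (e.g.\ $\{e_{2n}:n\in\mathbb{N}\}$ in $\ell^2(\mathbb{N})$ has cardinality $\aleph_0=\dim\ell^2(\mathbb{N})$ but is not a basis). Maximality of the family $\{f_j\}$ in $S(C_p(H'))$ alone does not close the gap either: if, say, $\{\widetilde{\eta}_j\}$ happens to span $H'$ but $\{\widetilde{\xi}_j\}$ does not, then any $b$ orthogonal to every $f_j$ must have range orthogonal to every $\widetilde{\eta}_j$ and hence be zero, so maximality is not violated even though $\{\widetilde{\xi}_j\}$ fails to be a basis.

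The paper fixes this with an extra step you are missing. Assuming $\{\widetilde{\xi}_j\}$ is not a basis, pick a unit vector $\widetilde{\xi}_0\perp\widetilde{\xi}_j$ for all $j$ and some $j_0\in J$, and look at $\Delta^{-1}(\widetilde{\eta}_{j_0}\otimes\widetilde{\xi}_0)$. By $(a)$ this is a minimal partial isometry, and since $\widetilde{\eta}_{j_0}\otimes\widetilde{\xi}_0\perp f_j$ for every $j\neq j_0$ (both the $\eta$- and the $\xi$-components are orthogonal), Lemma~\ref{l preservation of orthogonality} gives $\Delta^{-1}(\widetilde{\eta}_{j_0}\otimes\widetilde{\xi}_0)\perp e_j$ for all $j\neq j_0$. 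Because $\{\eta_j\}$ and $\{\xi_j\}$ \emph{are} bases of $H$, this forces $\Delta^{-1}(\widetilde{\eta}_{j_0}\otimes\widetilde{\xi}_0)\in\mathbb{T}\,\eta_{j_0}\otimes\xi_{j_0}$, and then $(b)$ gives $\widetilde{\eta}_{j_0}\otimes\widetilde{\xi}_0\in\mathbb{T}\,\widetilde{\eta}_{j_0}\otimes\widetilde{\xi}_{j_0}$, contradicting $\widetilde{\xi}_0\perp\widetilde{\xi}_{j_0}$. The symmetric argument handles $\{\widetilde{\eta}_j\}$. You need this (or an equivalent device) to complete $(d)$.
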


\begin{proof} $(a)$ Let $e = \xi_0\otimes \eta_0$ be a minimal partial isometry in $C_p(H)$.\smallskip

%
%

Let us pick a family $\{z_i : i \in I\}\subset S(C_p(H))$ such that $$\left(\bigcap_{i\in I} \{z_i \}^{\perp}\right)\cap S(C_p(H)) = \mathbb{T} e.$$ If $\Delta(e)$ is not a minimal partial isometry, we can find a (possibly finite) collection of mutually orthogonal minimal partial isometries $\{v_n : n\in J\}\subseteq C_p(H')$, with $\sharp J \geq 2$, and a sequence $(\mu_n)_n\in \ell_p$, with $\mu_j\neq 0$ for all $j\in J$, such that $\displaystyle \Delta (e) = \sum_{n=1}^{\infty} \mu_n v_n$. By Lemma \ref{l preservation of orthogonality} $\Delta (e) \perp \Delta(z_i)$, for all $i\in I$. Let us take $j_1\neq j_2$ in $J$. We observe that $v_{{j_1}}\perp v_{j_2}$, and $v_{{j_1}}, v_{j_2}\perp \Delta(z_i)$, for all $i\in I$. By applying Lemma \ref{l preservation of orthogonality} to $\Delta^{-1}$ we conclude that $\Delta^{-1} (v_{j_1})\perp \Delta^{-1} (v_{j_2})$ and $\Delta^{-1} (v_{j_1}), \Delta^{-1} (v_{j_2}) \perp z_i,$ for all $i\in I$, which is impossible.\smallskip

$(b)$ Let us pick a minimal partial isometry $e= \eta_0\otimes \xi_0$ in $C_p(H)$ and $\lambda\in \mathbb{T}$. We can find orthonormal bases $\{\eta_0\}\cup\{\eta_j: j\in J\}$ and $\{\xi_0\}\cup \{\xi_j: j\in J\}$ in $H$. Clearly, the element $e$ belongs to the set $\left(\bigcap_{j\in J} \{e_j \}^{\perp}\right)\cap S(C_p(H)) = \mathbb{T} (\eta_{0}\otimes \xi_{0}),$ where $e_j = \eta_j\otimes \xi_j$. We deduce from Lemma \ref{l preservation of orthogonality} that $$\lambda \Delta(e)\in \left(\bigcap_{j\in J} \{\Delta(e_j) \}^{\perp}\right)\cap S(C_p(H')) = \Delta(\mathbb{T} (\eta_{0}\otimes \xi_{0})) = \Delta(\mathbb{T}e),$$ and thus, there exists $\mu\in \mathbb{T}$ satisfying $\Delta(\mu e) = \lambda  \Delta (e)$. Now, by Remark \ref{r strictly convex commutes with -1} we get $$|\mu \pm 1|= \| \mu e \pm e\| = \|\Delta(\mu e) \pm \Delta(e)  \| = \| \lambda  \Delta(e) \pm \Delta(e)\| = |\lambda \pm 1| , $$ which assures that $\mu \in\{\lambda,\overline{\lambda}\}$ as desired.\smallskip

$(c)$ Suppose now that $e_0\in \mathcal{U}_{min} (H)$ with $\Delta(\mu e_0 ) = \mu \Delta(e_0)$ {\rm(}respectively, $\Delta(\mu e_0 ) = \overline{\mu} \Delta(e_0)${\rm)} for some $\mu\in \mathbb{T}\backslash\{\pm 1\}$. Take $\lambda\in \mathbb{T}\backslash\{\pm1\}$. By $(b)$ we have $\Delta(\lambda e_0 ) = \lambda \Delta(e_0)$ or $\Delta(\lambda e_0 ) = \overline{\lambda} \Delta(e_0)$. Since $$  \|\Delta(\lambda e_0 ) - \mu \Delta( e_0 )\|  =  \|\Delta(\lambda e_0 )- \Delta(\mu e_0 )\|  =   \| \lambda e_0 - \mu e_0 \|  =|\lambda -\mu| $$ $\hbox{(respectively, }  \|\Delta(\lambda e_0 ) - \mu \Delta( e_0 )\|  $ $=  \|\Delta(\lambda e_0 ) - \Delta(\overline{\mu} e_0 )\|  = $ $ \| \lambda e_0 -\overline{\mu} e_0 \|  =|\lambda - \overline{\mu}| ),$ it can be easily deduced that $\Delta(\lambda e_0 ) = \lambda \Delta(e_0)$ {\rm(}respectively, $\Delta(\lambda e_0 ) = \overline{\lambda} \Delta(e_0)${\rm)}. \smallskip

$(d)$ In the hypothesis of our statement, it follows from $(a)$ that, for each $j\in J$, the element $\Delta (\eta_j\otimes \xi_j)$ must be a minimal partial isometry. Lemma \ref{l preservation of orthogonality} implies that $\Delta (\eta_j\otimes \xi_j)\perp \Delta (\eta_k\otimes \xi_k)$ for all $j\neq k$ in $J$. We can therefore find orthonormal systems $\{\widetilde{\eta}_j: j\in J\}$ and $\{\widetilde{\xi}_j: j\in J\}$ in $H'$ such that $\Delta (\eta_j\otimes \xi_j) = \widetilde{\eta}_j\otimes \widetilde{\xi}_j$ for all $j\in J$. Fix $j_0$ in $J$. If $\{\widetilde{\xi}_j: j\in J\}$ is not a basis in $H'$, then there exists a norm-one element $\widetilde{\xi}_0$ in $H'$ such that $\widetilde{\xi}_0\perp  \widetilde{\xi}_j$ for all $j\in J$. By $(a)$, the element $\Delta^{-1}(\widetilde{\eta}_{j_0}\otimes \widetilde{\xi}_0) = \eta\otimes \xi$ is a minimal partial isometry in $B(H)$. Since $\widetilde{\eta}_{j_0}\otimes \widetilde{\xi}_0$ is orthogonal to $\widetilde{\eta}_{j}\otimes \widetilde{\xi}_j$ for every $j\neq j_0$, Lemma \ref{l preservation of orthogonality} assures  that $\Delta^{-1}(\widetilde{\eta}_{j_0}\otimes \widetilde{\xi}_0) = \eta\otimes \xi$ is orthogonal to  $\eta_j\otimes \xi_j$ for every $j\neq j_0$. Since $\{{\eta}_j: j\in J\}$ and $\{{\xi}_j: j\in J\}$ are orthonormal bases in $H$, we deduce that $\Delta^{-1}(\widetilde{\eta}_{j_0}\otimes \widetilde{\xi}_0) = \eta\otimes \xi$ must be an element in $\mathbb{T} {\eta}_{j_0}\otimes {\xi}_{j_0}$. That is, $\Delta^{-1}(\widetilde{\eta}_{j_0}\otimes \widetilde{\xi}_0) = \eta\otimes \xi = \mu {\eta}_{j_0}\otimes {\xi}_{j_0}$ for a unique $\mu \in \mathbb{T}$. Now, applying $(b)$ we get $$ \widetilde{\eta}_{j_0}\otimes \widetilde{\xi}_0 \in \{ \mu \ \widetilde{\eta}_{j_0}\otimes \widetilde{\xi}_{j_0}, \overline{\mu} \ \widetilde{\eta}_{j_0}\otimes \widetilde{\xi}_{j_0}\},$$ which is impossible. Similar arguments show that $\{\widetilde{\eta}_j: j\in J\}$ is an orthogonal basis in $H'$.
\end{proof}

\begin{corollary}\label{c minimal partial isometries in arbitrary dimension}
Let $H$ and $H'$ be complex Hilbert spaces, let $1< p <\infty$, $p\neq 2$, and
let $\Delta: S(C_p(H))\to S(C_p(H'))$ be a surjective isometry. Then the restriction $\Delta|_{_{\mathcal{U}_{min} (H)}}: \mathcal{U}_{min} (H)\to \mathcal{U}_{min} (H')$ is a surjective isometry.
\end{corollary}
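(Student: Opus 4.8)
The plan is to read the statement off directly from Proposition \ref{p minimal partial isometries in arbitrary dimension}$(a)$, once we make the elementary observation that every minimal partial isometry of $C_p(H)$ lies on the unit sphere $S(C_p(H))$. Indeed, if $e=\eta_0\otimes\xi_0\in\mathcal{U}_{min}(H)$, then $e$ is a rank-one partial isometry, so $|e|$ has a single non-zero eigenvalue, equal to $1$; consequently $\|e\|_p^p=\sum_{n}\sigma_n(e)^p=1$. Hence $\mathcal{U}_{min}(H)\subseteq S(C_p(H))$ and $\mathcal{U}_{min}(H')\subseteq S(C_p(H'))$, and the restriction $\Delta|_{_{\mathcal{U}_{min}(H)}}$ is a well-defined map whose codomain can be taken to be $S(C_p(H'))$.

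Next I would invoke Proposition \ref{p minimal partial isometries in arbitrary dimension}$(a)$, which asserts $\Delta(\mathcal{U}_{min}(H))=\mathcal{U}_{min}(H')$. This gives at once that $\Delta|_{_{\mathcal{U}_{min}(H)}}$ takes values in $\mathcal{U}_{min}(H')$ and maps $\mathcal{U}_{min}(H)$ onto $\mathcal{U}_{min}(H')$. Finally, since $\Delta:S(C_p(H))\to S(C_p(H'))$ is an isometry, its restriction to any subset of $S(C_p(H))$ is again an isometry for the metrics inherited from $C_p(H)$ and $C_p(H')$, and in particular it is injective. Putting these together, $\Delta|_{_{\mathcal{U}_{min}(H)}}:\mathcal{U}_{min}(H)\to\mathcal{U}_{min}(H')$ is a bijective isometry, that is, a surjective isometry.

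There is no genuine obstacle here: all the substance is contained in Proposition \ref{p minimal partial isometries in arbitrary dimension}$(a)$, and the corollary is merely a convenient repackaging of that fact for later use (where it is needed in order to transport to $\mathcal{U}_{min}(H')$ the metric information encoded by transition probabilities). The only point that deserves an explicit mention is the normalization $\|e\|_p=1$ for $e\in\mathcal{U}_{min}(H)$, which is what makes the restriction meaningful as a map between the relevant metric spaces.
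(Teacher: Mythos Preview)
Your argument is correct and matches the paper's approach: the corollary is stated without proof there, as it follows immediately from Proposition \ref{p minimal partial isometries in arbitrary dimension}$(a)$ together with the trivial observation that $\mathcal{U}_{min}(H)\subset S(C_p(H))$ and that the restriction of an isometry is an isometry. Your write-up simply makes these implicit steps explicit.
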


\begin{remark}\label{remark on a couple of minimal partial isometries} Let us briefly recall some basic facts on the relative position of two minimal partial isometries. Let $v$ and $e$ be two minimal partial isometries in $C_p(H)\subseteq K(H)\subseteq B(H)$, where $\dim(H)\geq 2$. Arguing as in the proof of \cite[Lemma 3.3]{FerGarPeVill17} {\rm(}see also \cite[Proposition 3.3 and its proof]{FerPe17}{\rm)}, we can choose two orthonormal systems $\{\eta_1, \eta_2\}$ and $\{\xi_1, \xi_2\}$ in $H$ to write $e$ and $v$ in the form $e = \eta_1\otimes \xi_1$, $v = \widetilde{\eta}_1\otimes \widetilde{\xi}_1,$ and $$v=\alpha  v_{11}+ \beta v_{12}+ \delta v_{22}+\gamma v_{21},$$ where
$e_1= v_{11}$, $v_{12} = {\eta}_2 \otimes {\xi}_1$, $v_{21} = {\eta}_1 \otimes {\xi}_2$, $v_{22} = {\eta}_2 \otimes {\xi}_2$,
$\alpha= \langle \xi_1 | \widetilde{\xi}_1 \rangle \langle  \widetilde{\eta}_1 | \eta_1 \rangle,$ $ \beta = \langle \xi_1 | \widetilde{\xi}_1 \rangle \langle  \widetilde{\eta}_1 | \eta_2 \rangle,$  $\gamma= \langle \xi_2 | \widetilde{\xi}_1 \rangle \langle  \widetilde{\eta}_1 | \eta_1 \rangle,$ $\delta = \langle \xi_2 | \widetilde{\xi}_1 \rangle \langle  \widetilde{\eta}_1 | \eta_2 \rangle \in \mathbb{C},$ with $|\alpha|^2+ |\beta|^2+|\gamma|^2+ |\delta|^2$ $=|\langle \xi_1 | \widetilde{\xi}_1 \rangle|^2 \|\widetilde{\eta}_1\|^2+ |\langle \xi_2 | \widetilde{\xi}_1 \rangle|^2 \|\widetilde{\eta}_1 \|^2 = \| \widetilde{\xi}_1\|^2=1$, and $\alpha \delta= \beta \gamma$. The appropriate matrix representation in these two systems reads as follows: $e= \left(
                  \begin{array}{cc}
                    1 & 0 \\
                    0 & 0 \\
                  \end{array}
                \right),$ and $v=\left(
                  \begin{array}{cc}
                    \alpha & \gamma \\
                    \beta & \delta \\
                  \end{array}
                \right).$\smallskip

We can now enlarge the orthonormal systems $\{\eta_1, \eta_2\}$ and $\{\xi_1, \xi_2\}$ to get two orthonormal bases $\{\eta_1, \eta_2\}\cup\{ \eta_j : j\in J\}$ and $\{\xi_1, \xi_2\}\cup\{ \xi_j : j\in J\}$ in $H$. If we set $e_j :=\eta_j\otimes \xi_j$ $j\in \{1,2\}\cup J$, then $\displaystyle \bigcap_{j\in J} \{ e_j\}^{\perp} \cong C_p(H_1),$ where $H_1$ is a two dimensional complex Hilbert space, with $e,v \in S\left(\displaystyle \bigcap_{j\in J} \{ e_j\}^{\perp}\right) \cong S(C_p(H_1))\subset S(C_p(H))$.
\end{remark}

We continue with our study on the relative position of the image of an arbitrary minimal partial isometry $v$ and the images of the elements in $\mathbb{T} v$ under a surjective isometry between the spheres.

\begin{proposition}\label{p minimal partial isometries uniformly homogeneous} Let $H$ and $H'$ be complex Hilbert spaces, let $p \in(1,\infty)\backslash\{2\}$, and let $\Delta: S(C_p(H))\to S(C_p(H'))$ be a surjective isometry. Then one, and precisely one, of the following statements holds:\begin{enumerate}[$(a)$]\item $\Delta (\lambda v) = \lambda \Delta(v)$ for every $\lambda\in \mathbb{T}$ and every $v\in \mathcal{U}_{min} (H);$
\item $\Delta (\lambda v) = \overline{\lambda} \Delta(v)$ for every $\lambda\in \mathbb{T}$ and every $v\in \mathcal{U}_{min} (H).$
\end{enumerate}
\end{proposition}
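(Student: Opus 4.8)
The plan is to attach to each minimal partial isometry a single \emph{type} --- according to whether $\Delta$ behaves linearly or conjugate-linearly on the circle through it --- and then to show, by a chaining argument inside $\mathcal{U}_{min}(H)$, that all of them share the same type. We may assume $\dim(H)\geq 2$, since if $\dim(H)=1$ then $S(C_p(H))=\mathcal{U}_{min}(H)=\mathbb{T}$ and the conclusion is immediate from parts $(b)$ and $(c)$ of Proposition~\ref{p minimal partial isometries in arbitrary dimension}. For each $v\in\mathcal{U}_{min}(H)$, Proposition~\ref{p minimal partial isometries in arbitrary dimension}$(b)$ with $\lambda=i$ shows that exactly one of $\Delta(iv)=i\Delta(v)$ and $\Delta(iv)=\overline{i}\,\Delta(v)$ holds (they are incompatible since $\Delta(v)\neq 0$), and then part $(c)$ upgrades the first case to $\Delta(\lambda v)=\lambda\Delta(v)$ for \emph{all} $\lambda\in\mathbb{T}$ and the second to $\Delta(\lambda v)=\overline{\lambda}\,\Delta(v)$ for all $\lambda\in\mathbb{T}$; call $v$ of type $(a)$, respectively type $(b)$. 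Hence $\mathcal{U}_{min}(H)=\mathcal{U}_a\sqcup\mathcal{U}_b$ for the corresponding subsets, and, as $\mathcal{U}_{min}(H)\neq\emptyset$ and the alternatives $(a)$, $(b)$ in the statement exclude each other, it suffices to prove that one of $\mathcal{U}_a$, $\mathcal{U}_b$ is empty.

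The central step is a rigidity lemma: if $e,v\in\mathcal{U}_{min}(H)$ satisfy $\mathrm{tr}(v^*e)\neq 0$, then $e$ and $v$ have the same type. To prove it I would argue by contradiction, assuming $e$ of type $(a)$ and $v$ of type $(b)$. Then for all $\lambda,\mu\in\mathbb{T}$,
\[
\|\lambda e-\mu v\|_p=\|\Delta(\lambda e)-\Delta(\mu v)\|_p=\|\lambda\Delta(e)-\overline{\mu}\,\Delta(v)\|_p,
\]
and the outer members equal $\|(\lambda\overline{\mu})\,e-v\|_p$ and $\|(\lambda\mu)\,\Delta(e)-\Delta(v)\|_p$ respectively. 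Since the map $(\lambda,\mu)\mapsto(\lambda\overline{\mu},\lambda\mu)$ is onto $\mathbb{T}\times\mathbb{T}$, both functions $\omega\mapsto\|\omega e-v\|_p$ and $\omega\mapsto\|\omega\Delta(e)-\Delta(v)\|_p$ must be constant on $\mathbb{T}$.

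It remains to see that the constancy of $\omega\mapsto\|\omega e-v\|_p$ forces $\mathrm{tr}(v^*e)=0$, contradicting the hypothesis. Here I would invoke Remark~\ref{remark on a couple of minimal partial isometries}: after enlarging to a two-dimensional $H_1$ we may represent $e$ and $v$ by the $2\times 2$ matrices displayed there, with coefficients $\alpha,\beta,\gamma,\delta$ satisfying $\alpha\delta=\beta\gamma$ and $|\alpha|=|\mathrm{tr}(v^*e)|$. A direct computation gives $\mathrm{tr}\big((\omega e-v)^*(\omega e-v)\big)=2-2\,\mathrm{Re}(\overline{\omega}\alpha)$ and $|\det(\omega e-v)|=|\delta|$, so the two singular values $s_1\geq s_2\geq 0$ of $\omega e-v$ satisfy $s_1^2+s_2^2=2-2\,\mathrm{Re}(\overline{\omega}\alpha)$ and $s_1s_2=|\delta|$. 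Setting $u=s_1^2\geq|\delta|$ we get $\|\omega e-v\|_p^p=u^{p/2}+|\delta|^{p}u^{-p/2}$; since $u\mapsto u^{p/2}+|\delta|^{p}u^{-p/2}$ is strictly increasing on $(|\delta|,\infty)$ and $u$ is in turn a strictly increasing function of $t=s_1^2+s_2^2\geq 2|\delta|$, the number $\|\omega e-v\|_p$ is a strictly monotone function of $\mathrm{Re}(\overline{\omega}\alpha)$. Therefore constancy in $\omega$ forces $\alpha=0$, i.e. $\mathrm{tr}(v^*e)=0$, proving the lemma.

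Finally I would globalize by chaining. Assume both $\mathcal{U}_a$ and $\mathcal{U}_b$ are non-empty, fix $e_0=\eta_0\otimes\xi_0\in\mathcal{U}_a$, and let $v=\eta\otimes\xi\in\mathcal{U}_{min}(H)$ be arbitrary. For any two unit vectors $\psi,\psi'\in H$ there is a unit vector meeting both non-orthogonally --- take $\psi$ if $\psi,\psi'$ are proportional, and $\frac{\psi+\psi'}{\|\psi+\psi'\|}$ otherwise --- so choose $\chi_\eta$ for the pair $(\eta_0,\eta)$ and $\chi_\xi$ for the pair $(\xi_0,\xi)$, and consider the chain
\[
\eta_0\otimes\xi_0\ \longrightarrow\ \eta_0\otimes\chi_\xi\ \longrightarrow\ \chi_\eta\otimes\chi_\xi\ \longrightarrow\ \chi_\eta\otimes\xi\ \longrightarrow\ \eta\otimes\xi .
\]
The trace pairing of two rank-one operators $\eta'\otimes\xi'$ and $\eta''\otimes\xi''$ is, up to complex conjugation, the product $\langle\eta'\,|\,\eta''\rangle\langle\xi'\,|\,\xi''\rangle$, hence non-zero exactly when $\eta'\not\perp\eta''$ and $\xi'\not\perp\xi''$; by construction consecutive terms of the chain have non-vanishing trace pairing, so the rigidity lemma propagates type $(a)$ along the chain, and in particular $v\in\mathcal{U}_a$. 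As $v$ was arbitrary, $\mathcal{U}_{min}(H)=\mathcal{U}_a$, contradicting $\mathcal{U}_b\neq\emptyset$; thus one of the two sets is empty, which is exactly the dichotomy $(a)$ versus $(b)$. I expect the rigidity lemma to be the main obstacle, concretely the implication ``$\omega\mapsto\|\omega e-v\|_p$ constant $\Rightarrow\mathrm{tr}(v^*e)=0$'': it rests on the reduction to a $2\times 2$ corner, on the $\omega$-independence of $|\det(\omega e-v)|$, and --- already for having Proposition~\ref{p minimal partial isometries in arbitrary dimension} at our disposal --- on the standing hypothesis $p\neq 2$.
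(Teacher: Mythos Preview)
Your argument is correct, but it follows a different route from the paper's. The paper proceeds topologically: it shows that both type sets are \emph{open} in $\mathcal{U}_{min}(H)$ by a short triangle-inequality trick (if $v$ is of type $(a)$ and $\|v-w\|_p<1$, then assuming $w$ of type $(b)$ yields $\|v-w\|_p=\|v+w\|_p$, forcing $1=\|v\|_p\le \|v-w\|_p<1$), and then invokes the connectedness of $\mathcal{U}_{min}(H)$ to conclude. Your approach replaces this with an algebraic rigidity lemma --- two minimal partial isometries with non-vanishing trace pairing share the same type --- proved by a $2\times2$ singular-value computation, followed by an explicit four-step chain linking any two rank-one operators through intermediaries with non-zero trace pairing.

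Both proofs are valid. The paper's is shorter and more conceptual, but leans on the connectedness of $\mathcal{U}_{min}(H)$ as a ``well known'' fact. Yours is more self-contained and yields a sharper local statement (the rigidity lemma), at the cost of the determinant/singular-value analysis in the $2\times2$ corner. One small point worth making explicit in your write-up: in the rigidity lemma you divide by $s_1$ to get $\|\omega e-v\|_p^p=u^{p/2}+|\delta|^p u^{-p/2}$, so you should note that $s_1>0$ for all $\omega$; this follows because $e$ and $v$ have different types and hence $v\notin\mathbb{T}e$, so $\omega e-v\neq 0$.
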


\begin{proof} Let us define $\mathfrak{D}_1:= \{ v\in \mathcal{U}_{min} (H): \Delta (\lambda v) = \lambda \Delta(v) \hbox{ for all } \lambda\in \mathbb{T}\}$ and $\mathfrak{D}_2:= \{ v\in \mathcal{U}_{min} (H): \Delta (\lambda v) = \overline{\lambda} \Delta(v)  \hbox{ for all } \lambda\in \mathbb{T} \}$. Proposition \ref{p minimal partial isometries in arbitrary dimension}$(b)$ and $(c)$ implies that $\mathfrak{D}_1 \stackrel{\circ}{\cup} \mathfrak{D}_2$.\smallskip

We pick $v\in \mathfrak{D}_1$ and $w\in \mathcal{U}_{min} (H)$ with $\|v-w\|_p <\varepsilon$ for some $0<\varepsilon <1$.  Proposition \ref{p minimal partial isometries in arbitrary dimension}$(b)$ assures that $\Delta (i w) = i \Delta(w)$ or $\Delta (i w) = -i \Delta(w)$. In the second case we deduce from Remark \ref{r strictly convex commutes with -1} that $$\|v-w\|_p =\| \Delta (i v) -  \Delta(i w) \|_p = \| i \Delta (v) + i \Delta(w) \|_p  = \| \Delta (v) + \Delta(w) \|_p = \|v+w\|_p,$$ and hence $$1= \|v\|_p = \left\| \frac{v-w}{2} + \frac{v+w}{2} \right\|_p\leq \left\| \frac{v-w}{2}\right\|_p + \left\|\frac{v+w}{2} \right\|_p <\varepsilon<1,$$ which is impossible. Therefore $\Delta (i w) = i \Delta(w)$ and Proposition \ref{p minimal partial isometries in arbitrary dimension}$(c)$ proves that $w\in \mathfrak{D}_1$. We have therefore shown that $\mathfrak{D}_1$ is open. Similar arguments assure  that $\mathfrak{D}_2$ is also open.\smallskip

Finally, since it is well known that $\mathcal{U}_{min} (H)$ is a connected set, then $\mathcal{U}_{min} (H) = \mathfrak{D}_1$ or $\mathcal{U}_{min} (H) = \mathfrak{D}_2$, which concludes the proof.
\end{proof}

The conclusion of the above proposition in the case $p=1$ was established in \cite{FerGarPeVill17}.\smallskip

The next lemma is probably part of the folklore in the theory of inequalities, however we do not know an explicit reference. For each $1\leq j\leq m$ ($j,m\in \mathbb{N}$), let $e_{j}$ denote the element in $\ell_p^m$ whose $j$th-component is $1$ and all its other component are zero.

\begin{lemma}\label{l lp commutative} Let $\widehat{a}$ be a hermitian element in $S(\ell_p^{2n})$, with $p\in (1,\infty)\backslash\{2\}$, and let $\gamma$ be a real number with $\gamma\geq 1$. Let us assume that $$\widehat{a}= \left(\frac{1}{2^{\frac1p}} \lambda_1 , \ldots, \frac{1}{2^{\frac1p}} \lambda_{n},-\frac{1}{2^{\frac1p}} \lambda_{n},\ldots,-\frac{1}{2^{\frac1p}} \lambda_{1} \right),$$ where $\lambda_{1}\geq \lambda_{2}\geq \ldots \geq \lambda_{n}\geq 0$ and $\displaystyle \sum_{j=1}^n \lambda_j^p =1$. We set ${a}= \left(\lambda_1 , \ldots, \lambda_{n}\right)\in S(\ell_p^{n}),$
$$\mathcal{U}_2^s(\ell_p^{2n})=\left\{ s_{ij} :=\frac{1}{2^{\frac1p}}(e_i-e_j): 1\leq i\neq j\leq 2n \right\}\subset S(\ell_p^{2n}),$$ and $\hbox{Proj}_1^{\pm}(\ell_p^{n})=\left\{ \pm e_j : 1\leq j\leq n \right\}\subset S(\ell_p^{n})$. Then the following statements hold:
\begin{enumerate}[$(a)$] \item The minimum value of the mapping $k_a : \hbox{Proj}_1^{\pm}(\ell_p^{n})\to \mathbb{R}_0^+$, $k_a(z) :=\|a-\gamma z\|_p^p$ is $\displaystyle (\gamma-\lambda_1)^p+\sum_{j=2}^n \lambda_j^p = (\gamma-\lambda_1)^p + 1-\lambda_1^p$, and it is attained only at those points $z\in \hbox{Proj}_1^{\pm}(\ell_p^{n})$ satisfying $z= e_i$ with $\lambda_i = \lambda_1$;
\item The minimum value of the mapping $h_{\widehat{a}}: \mathcal{U}_2^s(\ell_p^{2n}) \to \mathbb{R}_0^+$, $h_{\widehat{a}}(s_{ij}) :=\|\widehat{a}-\gamma s_{ij}\|_p^p$ is $\displaystyle (\gamma-\lambda_1)^p+\sum_{j=2}^n \lambda_j^p = (\gamma-\lambda_1)^p + 1-\lambda_1^p$, and it is attained only at those point $s_{ij}\in \mathcal{U}_2^s$ satisfying $1\leq i\leq n$, $n+1\leq j\leq 2n$, $\lambda_i = \lambda_j = \lambda_1$.
\end{enumerate}
\end{lemma}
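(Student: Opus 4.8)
The plan is to reduce both statements to a single one–dimensional optimization problem and then analyse that problem by convexity. For part $(a)$, note that for $z = e_i$ with $1\le i\le n$ we have
$$\|a - \gamma e_i\|_p^p = (\gamma - \lambda_i)^p + \sum_{j\neq i} \lambda_j^p = (\gamma-\lambda_i)^p + 1 - \lambda_i^p,$$
using $\gamma\ge 1\ge \lambda_i$ so that $|\gamma-\lambda_i| = \gamma-\lambda_i$; while for $z = -e_i$ we get $(\gamma+\lambda_i)^p + 1 - \lambda_i^p$, which is clearly at least as large. So $k_a(z)$ depends only on the single coordinate $\lambda_i$ hit by $z$, and everything reduces to minimizing the function $g(t) := (\gamma - t)^p - t^p$ over $t\in\{\lambda_1,\dots,\lambda_n\}\subseteq[0,1]$. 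The key computation is that $g$ is strictly decreasing on $[0,\gamma]$: its derivative is $g'(t) = -p(\gamma-t)^{p-1} - p t^{p-1} < 0$ for $t\in(0,\gamma)$, and $g$ is continuous at the endpoints, so $g(t_1) > g(t_2)$ whenever $t_1 < t_2$. Since $\lambda_1 = \max_j \lambda_j$, the minimum of $g$ over $\{\lambda_j\}$ is $g(\lambda_1)$, attained exactly at those indices $i$ with $\lambda_i = \lambda_1$, and for those indices only the choice $z = +e_i$ (not $-e_i$) attains it. This gives $(a)$, with minimum value $(\gamma-\lambda_1)^p + 1 - \lambda_1^p$.

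For part $(b)$, I would compute $\|\widehat a - \gamma s_{ij}\|_p^p$ by cases according to how the support $\{i,j\}$ of $s_{ij}$ meets the block structure. Write the $2n$ coordinates of $\widehat a$ as $\mu_k = 2^{-1/p}\lambda_k$ for $1\le k\le n$ and $\mu_k = -2^{-1/p}\lambda_{2n+1-k}$ for $n+1\le k\le 2n$, so $\{|\mu_k|\} = \{2^{-1/p}\lambda_j : 1\le j\le n\}$ with each value appearing twice (once positive, once negative). For a given pair $i\neq j$,
$$\|\widehat a - \gamma s_{ij}\|_p^p = \big|\mu_i - 2^{-1/p}\gamma\big|^p + \big|\mu_j + 2^{-1/p}\gamma\big|^p + \sum_{k\neq i,j} |\mu_k|^p = \big|\mu_i - 2^{-1/p}\gamma\big|^p + \big|\mu_j + 2^{-1/p}\gamma\big|^p + 1 - |\mu_i|^p - |\mu_j|^p.$$
Since $\gamma\ge 1$, the factor $2^{-1/p}\gamma$ dominates $2^{-1/p}\lambda_k \le 2^{-1/p}$ only when... actually $2^{-1/p}\gamma$ need not exceed $2^{-1/p}$, but the sign analysis still works: to make $|\mu_i - 2^{-1/p}\gamma|$ small we want $\mu_i > 0$ and large, i.e. $\mu_i = 2^{-1/p}\lambda_a$ for small $a$; to make $|\mu_j + 2^{-1/p}\gamma|$ small we want $\mu_j < 0$ with $|\mu_j|$ large, i.e. $\mu_j = -2^{-1/p}\lambda_b$ for small $b$. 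Concretely, if $\mu_i = 2^{-1/p}\lambda_a$ and $\mu_j = -2^{-1/p}\lambda_b$ the expression becomes
$$2^{-1}\big[(\gamma-\lambda_a)^p + (\gamma-\lambda_b)^p\big] + 1 - 2^{-1}\lambda_a^p - 2^{-1}\lambda_b^p = \tfrac12 g(\lambda_a) + \tfrac12 g(\lambda_b) + 1,$$
with $g$ as above; whereas any configuration with $\mu_i\le 0$ or $\mu_j\ge 0$ replaces one of the $(\gamma-\lambda)^p$ terms by $(\gamma+\lambda)^p$, strictly increasing the value (and the case $1\le i,j\le n$ or $n+1\le i,j\le 2n$ forces exactly this, since then $\mu_i,\mu_j$ have the same sign). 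Minimizing $\tfrac12 g(\lambda_a)+\tfrac12 g(\lambda_b)+1$ over $a,b$, using that $g$ is minimized uniquely at arguments equal to $\lambda_1$, gives $g(\lambda_1) + 1 = (\gamma-\lambda_1)^p + 1 - \lambda_1^p$, attained exactly when $\lambda_a = \lambda_b = \lambda_1$, i.e. $1\le i\le n$, $n+1\le j\le 2n$, $\lambda_i = \lambda_j = \lambda_1$ (or the symmetric relabeling $i\leftrightarrow j$, which gives the same unordered pair), as claimed.

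The only genuinely delicate point is the uniqueness bookkeeping: one must track which sign choices and which block placements are forced, and verify that every configuration other than the stated one is \emph{strictly} worse. This hinges entirely on the strict monotonicity of $g$ on $[0,\gamma]$ — which fails at $p = 2$ because there $g(t) = (\gamma-t)^2 - t^2 = \gamma^2 - 2\gamma t$ is still strictly decreasing, so in fact the monotonicity is fine for all $p>1$; the real role of $p\neq 2$ is only that it guarantees (via the Clarkson–McCarthy discussion) that the ambient isometry problem is rigid, and within this lemma the hypothesis $p\neq 2$ is not actually needed for the stated conclusion. I would therefore present the argument uniformly for $p\in(1,\infty)$, carry the constant $2^{-1/p}$ through the block computation carefully, and isolate the one–variable claim ``$g$ is strictly decreasing on $[0,\gamma]$, hence uniquely minimized over a finite set at the largest point $\le\gamma$'' as the single lemma doing all the work.
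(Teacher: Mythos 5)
Your argument is correct and essentially the same as the paper's: both reduce part $(b)$ to part $(a)$ by a case analysis on how $\{i,j\}$ meets the two blocks, using the identity $h_{\widehat a}(s_{ij})=\tfrac12 k_a(e_i)+\tfrac12 k_a(e_j)$ for cross-block pairs, with the strict monotonicity of $t\mapsto(\gamma-t)^p+1-t^p$ on $[0,1]$ doing all the work (and you are right that the hypothesis $p\neq 2$ plays no role in this lemma). One small slip: your parenthetical suggesting that the ``symmetric relabeling $i\leftrightarrow j$'' also attains the minimum contradicts your own correct observation that placing the first index in the negative block replaces a $(\gamma-\lambda)^p$ term by $(\gamma+\lambda)^p$; since $\lambda_1>0$, only the ordered pairs with $1\le i\le n$, $n+1\le j\le 2n$ are minimizers, exactly as the lemma asserts.
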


\begin{proof} $(a)$ Let us pick $e_j$ with $\lambda_j<\lambda_1$ and $e_i$ with $\lambda_i = \lambda_1$. It is easy to check that, since $(\gamma-\lambda_j)^p > (\gamma-\lambda_i)^p= (\gamma-\lambda_1)^p$, and the function $t\mapsto (\gamma -t)^p+ 1 -t^p$ ($t\in ]0, 1]$) is strictly decreasing, we have $$k_a(e_j) = \|a-\gamma e_j\|_p^p = \sum_{k\neq j} \lambda_k^p + (\gamma - \lambda_j )^p = 1- \lambda_j^p + (\gamma -\lambda_j)^p $$ $$> 1- \lambda_1^p + (\gamma -\lambda_1 )^p = k_a(e_1) = k_a(e_i).$$

On the other hand, for any $j$ we have $$k_a(-e_j) = \|a+\gamma e_j\|_p^p = \sum_{k\neq j} \lambda_k^p + (\lambda_j +\gamma )^p >  \sum_{k\neq j} \lambda_k^p + (\gamma - \lambda_j )^p= \|a-\gamma e_j\|_p^p= k_a(e_j) ,$$ which concludes the proof of $(a)$.\smallskip

$(b)$ Let us take $i,j\in \{1,\ldots,n \}$ and compute $$h_{\widehat{a}}(s_{ij}) :=\|\widehat{a}-\gamma s_{ij}\|_p^p =\sum_{k=1, k\neq i,j}^n  \frac{\lambda_{k}^p}{2} + \frac{ (\gamma -\lambda_i )^p}{2} +  \frac{(\lambda_j +\gamma )^p}{2} + \sum_{k=1}^n  \frac{\lambda_{k}^p}{2} $$
$$=\frac12 - \frac{\lambda_{i}^p}{2} - \frac{\lambda_{j}^p}{2} + \frac{ (\gamma - \lambda_i )^p}{2} +  \frac{(\lambda_j +\gamma )^p}{2} +\frac12 $$ $$\geq \frac12 \left(1 - {\lambda_{1}^p} +  (\gamma -\lambda_1 )^p \right) +\frac12 \left(1- {\lambda_{j}^p}  +  {(\lambda_j +\gamma )^p}\right) $$ $$ > \frac12 \left(1 - {\lambda_{1}^p} +  (\gamma -\lambda_1 )^p \right) +\frac12 \left(1- {\lambda_{j}^p}  +  {(\gamma - \lambda_j )^p}\right)\geq 1 - {\lambda_{1}^p} +  (\gamma - \lambda_1 )^p. $$

We can similarly prove that for $i,j\in \{n+1,\ldots, 2 n \}$ we have $$h_{\widehat{a}}(s_{ij}) > 1 - {\lambda_{1}^p} +  (\gamma -\lambda_1 )^p. $$

It is not hard to check that for $(i,j)\in \{1,\ldots,n \}\times\{n+1,\ldots,2n\}$ (respectively, for $(j,i)\in \{1,\ldots,n \}\times\{n+1,\ldots,2n\}$) the identity $$ h_{\widehat{a}}(s_{ij}) = \|\widehat{a}-\gamma s_{ij}\|_p^p = \frac12 \|{a}-\gamma e_i\|_p^p +\frac12 \|{a}-\gamma e_{j}\|_p^p =\frac12 k_a(e_i) + \frac12 k_a(e_j),$$ (respectively, $$ h_{\widehat{a}}(s_{ij}) = \|\widehat{a}-\gamma s_{ij}\|_p^p = \frac12 \|{a}+\gamma e_i\|_p^p +\frac12 \|{a}+\gamma e_{j}\|_p^p =\frac12 k_a(-e_i) + \frac12 k_a(-e_j))$$ holds. Finally, the desired statement is a straight consequence of the above identities, what is proved in the first two paragraphs, and statement $(a)$.
\end{proof}

We continue with another technical result.

\begin{proposition}\label{p min value} Let $H$ be a finite dimensional complex Hilbert space, and let $p$ be a real number in $(1,\infty)\backslash\{2\}$. Given $a\in C_p(H)$ and a real constant $\gamma \geq 1$, we consider the mapping $f_a : \mathcal{U}_{min} (H)\to \mathbb{R}_0^+$ defined by the assignment $e\mapsto f_a(e):=\|a-\gamma e\|_p^p$. Suppose $\displaystyle a = \sum_{j=1}^n \sigma_j(a) e_j$, where $e_1,\ldots,e_n$ are mutually orthogonal minimal partial isometries, $\sigma_1(a)= \ldots = \sigma_{j_0} (a) > \sigma_{j_0+1} (a)\geq \ldots \geq \sigma_n(a) \geq 0$ are the singular values of $a$ and $\displaystyle \|a\|_p^p = \sum_{j=1}^n \sigma_j(a)^p =1$. Then \begin{equation}\label{eq minimum value fa} \min_{e\in \mathcal{U}_{min} (H)}\!\! f_a (e) = f_a (e_j) = (\gamma-\sigma_1(a))^p+\sum_{j=2}^n \sigma_j(a)^p = (\gamma-\sigma_1(a))^p + 1-\sigma_1(a)^p,
\end{equation}  for every $1\leq j\leq j_0$. Furthermore, if $e_m$ denotes the partial isometry $\displaystyle e_m=\sum_{j=1}^{j_0} e_j$, then \begin{equation}\label{eq points of minimum value for fa} \hbox{$f_a$ attains its minimum value at $v\in \mathcal{U}_{min} (H)$}\hbox{ if, and only if, } v\leq e_m.
\end{equation}
\end{proposition}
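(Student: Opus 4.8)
My plan is to prove \eqref{eq minimum value fa} and \eqref{eq points of minimum value for fa} in three steps: attainment of the minimum, computation of $f_a$ on $\{v\in\mathcal U_{min}(H):v\le e_m\}$, and a matching lower bound for all of $\mathcal U_{min}(H)$ together with the equality discussion.

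Since $H$ is finite dimensional, $\mathcal U_{min}(H)$ is compact — it is the image of the compact set $S(H)\times S(H)$ under the continuous map $(\eta,\xi)\mapsto\eta\otimes\xi$ — and $f_a$ is continuous, so the minimum is attained. Next, if $v\le e_m$ then $e_m-v$ is a partial isometry orthogonal to $v$, and, since $e_m\perp e_j$ for every $j>j_0$, both $v$ and $e_m-v$ are orthogonal (in $B(H)$, hence in $C_p(H)$) to each such $e_j$. Using $\sigma_1(a)=\cdots=\sigma_{j_0}(a)$ one writes
\[
a-\gamma v=(\sigma_1(a)-\gamma)\,v+\sigma_1(a)\,(e_m-v)+\sum_{j>j_0}\sigma_j(a)\,e_j,
\]
a sum of mutually orthogonal elements; by the additivity of $\|\cdot\|_p^p$ over families of mutually orthogonal elements (in the spirit of \eqref{eq orthogonality in Cp}) together with $\|v\|_p^p=1$, $\|e_m-v\|_p^p=j_0-1$ and $\|e_j\|_p^p=1$, one obtains $f_a(v)=(\gamma-\sigma_1(a))^p+(j_0-1)\sigma_1(a)^p+\sum_{j>j_0}\sigma_j(a)^p=(\gamma-\sigma_1(a))^p+\sum_{j\ge2}\sigma_j(a)^p$. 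Since $e_j\le e_m$ for $1\le j\le j_0$, this establishes the formula \eqref{eq minimum value fa} up to the reverse inequality and shows that every $v\le e_m$ attains its right-hand side. So it remains to prove that $f_a(v)\ge(\gamma-\sigma_1(a))^p+\sum_{j\ge2}\sigma_j(a)^p$ for every $v\in\mathcal U_{min}(H)$, with equality only when $v\le e_m$.

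Composing $a$ with the unitary part of its polar decomposition we may assume $a\ge0$; then $e_m$ is the spectral projection of $a$ for its largest eigenvalue $\sigma_1(a)$, and $a=\sigma_1(a)\,e_m+a_0$ with $a_0\perp e_m$ and $\|a_0\|=\sigma_{j_0+1}(a)<\sigma_1(a)$. I then pass to the self-adjoint linking operator $\widehat a=\bigl(\begin{smallmatrix}0&a\\a&0\end{smallmatrix}\bigr)\in C_p(H\oplus H)$, whose spectrum is $\{\pm\sigma_j(a)\}_j$; for $v\in\mathcal U_{min}(H)$ the element $\widehat v=\bigl(\begin{smallmatrix}0&v\\v^*&0\end{smallmatrix}\bigr)$ is self-adjoint with $\widehat v^{\,2}$ a rank-two projection, hence $\widehat v=q_+-q_-$ for a pair of mutually orthogonal minimal projections $q_\pm$ (its $\pm1$-eigenprojections), and a direct computation gives $f_a(v)=\|a-\gamma v\|_p^p=\tfrac12\|\widehat a-\gamma\widehat v\|_p^p$. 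Moreover one checks that $v\le e_m$ if and only if $q_+\le P_+$ and $q_-\le P_-$, where $P_\pm$ denote the spectral projections of $\widehat a$ for $\pm\sigma_1(a)$. Thus it suffices to prove, for every pair $q_+\perp q_-$ of minimal projections in $C_p(H\oplus H)$,
\[
\tfrac12\bigl\|\widehat a-\gamma(q_+-q_-)\bigr\|_p^p\ \ge\ (\gamma-\sigma_1(a))^p+\sum_{j\ge2}\sigma_j(a)^p,
\]
with equality forcing $q_\pm\le P_\pm$. For this I pinch $\widehat a-\gamma\widehat v$ onto the three mutually orthogonal projections $q_+,q_-,1-q_+-q_-$: since $q_\pm\widehat v q_\pm=\pm q_\pm$ and $(1-q_+-q_-)\widehat v=0$, the pinching inequality yields
\[
\bigl\|\widehat a-\gamma\widehat v\bigr\|_p^p\ \ge\ \bigl|\gamma-\langle u_+,\widehat a\,u_+\rangle\bigr|^p+\bigl|\gamma+\langle u_-,\widehat a\,u_-\rangle\bigr|^p+\bigl\|(1-q_+-q_-)\,\widehat a\,(1-q_+-q_-)\bigr\|_p^p,
\]
where $u_\pm$ is a unit vector spanning $q_\pm$.

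One now combines three ingredients: $\pm\langle u_\pm,\widehat a\,u_\pm\rangle\le\sigma_1(a)$ (the numerical radius of $\widehat a$), so each of the first two terms is at least $(\gamma-\sigma_1(a))^p$; the eigenvalues of the compression $(1-q_+-q_-)\widehat a(1-q_+-q_-)$ to a codimension-two subspace interlace those of $\widehat a$, which bounds the third term below by (essentially) $2\sum_{j\ge3}\sigma_j(a)^p$; and the one-variable estimates used in the proof of Lemma \ref{l lp commutative} — monotonicity of $t\mapsto(\gamma-t)^p+1-t^p$ on $[0,1]$ and strict convexity/super-additivity of $t\mapsto t^p$ for $p\ne2$ — which must supply the missing $2\sigma_2(a)^p$. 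The genuinely delicate point, which I expect to be the main obstacle, is that the three terms cannot be bounded one at a time: a deficit in the compression term (occurring exactly when $u_\pm$ fails to lie in $\mathrm{ran}(P_\pm)$) is compensated by a strict gain in the corresponding diagonal term $|\gamma\mp\langle u_\pm,\widehat a\,u_\pm\rangle|^p$. One therefore has to run the estimate jointly, bounding $\langle u_\pm,\widehat a\,u_\pm\rangle$ above in terms of the overlap of $u_\pm$ with $\mathrm{ran}(P_\pm)$ and simultaneously bounding from below, by the complementary amount, the number of eigenvalues of the compression equal to $\sigma_1(a)$, and then invoking the strict convexity inequalities for $p\ne2$ to close the bound and to read off that equality forces $\langle u_\pm,\widehat a\,u_\pm\rangle=\pm\sigma_1(a)$, i.e. $q_\pm\le P_\pm$, i.e. $v\le e_m$. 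The hypothesis $p\ne2$ is indispensable here, since for $p=2$ the relevant convexity is affine and the conclusion breaks down.
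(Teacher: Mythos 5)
Your first two steps (compactness, and the exact computation of $f_a(v)=(\gamma-\sigma_1(a))^p+\sum_{j\geq 2}\sigma_j(a)^p$ for $v\leq e_m$ via orthogonal additivity of $\|\cdot\|_p^p$) are correct and agree with the paper, and your reduction through the Wielandt linking matrix $\widehat a$ is also the paper's device. But the third step, which carries the entire content of both \eqref{eq minimum value fa} and \eqref{eq points of minimum value for fa}, is not a proof: after pinching onto $q_+,q_-,1-q_+-q_-$ and applying Cauchy interlacing you obtain only $f_a(v)\geq(\gamma-\sigma_1(a))^p+\sum_{j\geq 3}\sigma_j(a)^p$, and you yourself flag that the missing $\sigma_2(a)^p$ ``must'' come from a joint estimate you do not carry out. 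This is a genuine gap, not a routine verification: the deficit cannot be recovered term by term, since the compression $(1-q_+-q_-)\,\widehat a\,(1-q_+-q_-)$ really can have $p$-norm strictly below $2\sum_{j\geq2}\sigma_j(a)^p$ (a codimension-two compression of $\mathrm{diag}(\sigma_1,\sigma_2,-\sigma_2,-\sigma_1)$ can even vanish), so the pinching has already discarded exactly the off-diagonal information that would have to compensate. The ``delicate point'' you defer is the whole proposition.

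The paper closes this with two inputs you do not use. First, the Lidskii--Mirsky inequality for unitarily invariant norms (\cite[Theorem 9.8]{Bhat2007}), $\|\mathrm{diag}(\sigma_1(x),\dots)-\mathrm{diag}(\sigma_1(y),\dots)\|_p\leq\|x-y\|_p$, applied directly to $a$ and $\gamma e$, gives the sharp lower bound $(\gamma-\sigma_1(a))^p+1-\sigma_1(a)^p$ in one line and settles \eqref{eq minimum value fa}. Second, for the equality case, the set of admissible $\gamma\widehat v$ is a unitary orbit, and the Bhatia--\v{S}emrl theorem \cite[Theorem 1]{BhatSemr96} forces any minimizer of $z\mapsto\|\widehat a-\gamma z\|_p$ on that orbit to commute with $\widehat a$; this reduces everything to the commutative $\ell_p^{2n}$ computation of Lemma \ref{l lp commutative}, where the strict monotonicity/convexity for $p\neq 2$ identifies the minimizers as exactly those with $v\leq e_m$. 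If you want to rescue your outline, you should replace the pinching step by one of these two mechanisms (or prove an equivalent variational statement); as written, neither displayed conclusion of the proposition has been established.
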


\begin{proof} We observe that, since $H$ is finite dimensional, the set $\mathcal{U}_{min} (H)$ is $\|.\|_p$-compact. Obviously $f_a$ is $\|.\|_p$-continuous, and thus $f_a$ attains its maximum and minimum values in $\mathcal{U}_{min} (H)$. Next we shall determine the points in $\mathcal{U}_{min} (H)$ at which $f_a$ attains its minimum value.\smallskip

We have assumed that $\sigma_1(a)= \ldots = \sigma_{j_0}(a) > \sigma_{j_0+1}(a)\geq \ldots \geq \sigma_n(a)$ for some $j_0\in \{1,\ldots,n\}$. Having in mind that $\|.\|_p$ is unitarily-invariant, we deduce from \cite[Theorem 9.8]{Bhat2007}, applied to $a$ and $\gamma e$, that $$ \| \hbox{diag} (\sigma_1(a), \ldots, \sigma_n(a)) - \hbox{diag} (\sigma_1(\gamma e), \ldots, \sigma_n(\gamma e)) \|_p \leq \|a-\gamma e\|_p,$$ where $\hbox{diag} (., \ldots, .)$ stands for the diagonal matrix whose entries are given by the corresponding list. Since, clearly $\hbox{diag} (\sigma_1(\gamma e), \ldots, \sigma_n(\gamma e)) = \hbox{diag} (\gamma, 0, \ldots, 0) $, we get $$ f_a (e_j) = (\gamma-\sigma_1(a))^p + 1-\sigma_1(a)^p   $$ $$= \| \hbox{diag} (\sigma_1(a), \ldots, \sigma_n(a)) - \hbox{diag} (\sigma_1(\gamma e), \ldots, \sigma_n(\gamma e)) \|_p^p \leq \|a-\gamma e\|_p^p = f_a (e),$$ for every $1\leq j\leq j_0$, which proves \eqref{eq minimum value fa}.\smallskip

Let $e_m$ denote the partial isometry $\displaystyle e_m=\sum_{j=1}^{j_0} e_j$. Accordingly to our notation the support partial isometry of $a$, $\displaystyle  s(a) = \sum_{\sigma_j(a)\neq 0} e_j,$ satisfies $e_m\leq s(a)$. Let $v$ be any minimal partial isometry such that $v\leq e_m$, that is, $e_m = vv^*e_m v^*v + (1-vv^*) e_m (1-v^*v) = v + (1-vv^*) e_m (1-v^*v).$ Since $\displaystyle a= \sigma_1(a) v + \sigma_1(a) (e_m-v) +\sum_{j=j_0 +1}^n \sigma_j(a) e_j,$ we can easily compute that $$ f_a (v) = \|a-\gamma v\|_p^p  = (\gamma -\sigma_1(a))^p+\sum_{j=2}^n \sigma_j(a)^p = \min_{e\in \mathcal{U}_{min} (H)}\!\! f_a (e) = f_a (e_j).$$ We have therefore shown that $f_a$ attains its minimum value at every minimal partial isometry $v\in \mathcal{U}_{min} (H)$ with $v\leq e_m$.\smallskip

In order to prove \eqref{eq points of minimum value for fa} we shall make use of an ingenious device due to Wielandt (see for example \cite[page 24]{Bhat2007}). Let $\widetilde{a}$ denote the matrix in $M_2(M_n(\mathbb{C}))= M_{2n}(\mathbb{C})$ defined by $\widetilde{a} = \frac{1}{2^{\frac1p}} \left(
                                                                                                              \begin{array}{cc}
                                                                                                                0 & a \\
                                                                                                                a^* & 0 \\
                                                                                                              \end{array}
                                                                                                            \right)$.
It is known that $\widetilde{a}$ is hermitian and the eigenvalues of $\widetilde{a}$ are precisely the singular values of $\frac{1}{2^{\frac1p}} a$ together with their negatives (cf. \cite[page 24]{Bhat2007}). Consequently, $$\|\widetilde{a}\|_p^p = 2 \sum_{j=1}^n \left(\frac{1}{2^{\frac1p}}\right)^p \sigma_j(a)^p = \|a\|_p^p =1.$$

Let us consider the set $$\mathcal{U}_{2}^{sym} (M_{2n}(\mathbb{C})) = \Big\{ \frac{1}{2^{\frac1p}} p_1 - \frac{1}{2^{\frac1p}} p_2 : p_1,p_2\in \hbox{Proj}_1(M_{2n}(\mathbb{C})), \ p_1\perp p_2 \Big\}.$$ Clearly, $\mathcal{U}_{2}^{sym} (M_{2n}(\mathbb{C})) \subseteq S(C_p(M_{2n}(\mathbb{C}))).$ Let $g_{\widetilde{a}} : \mathcal{U}_{2}^{sym} (M_{2n}(\mathbb{C})) \to \mathbb{R}_0^+$ be the function defined by $g_{\widetilde{a}} (z) = \| {\widetilde{a}} -\gamma z\|_p^p$ ($\forall z\in \mathcal{U}_{2}^{sym} (M_{2n}(\mathbb{C})))$. A compactness argument shows that $g_{\widetilde{a}}$ attains its minimum value at $\mathcal{U}_{2}^{sym} (M_{2n}(\mathbb{C}))$. We shall prove next that \begin{equation}\label{eq minimum value qwidetildea} \min_{z\in \mathcal{U}_{2}^{sym} (M_{2n}(\mathbb{C}))}\!\! g_{\widetilde{a}} (z) =  (\gamma -\sigma_1(a))^p+\sum_{j=2}^n \sigma_j(a)^p
\end{equation} $$ = (\gamma -\sigma_1(a))^p + 1-\sigma_1(a)^p= \min_{e\in \mathcal{U}_{min} (H)}\!\! f_a (e).$$ Fix $z\in \mathcal{U}_{2}^{sym} (M_{2n}(\mathbb{C}))$. Since $\|.\|_p$ is unitarily-invariant, a new application of \cite[Theorem 9.8 or Theorem 9.7]{Bhat2007}, applied to $\widetilde{a}$ and $\gamma  z$, that $$ \| \hbox{diag} (\sigma_1(\widetilde{a}), \ldots, \sigma_{2n}(\widetilde{a})) - \hbox{diag} (\sigma_1(\gamma z), \ldots, \sigma_{2n}(\gamma  z)) \|_p \leq \|\widetilde{a}-\gamma  z\|_p,$$ where $$(\sigma_1(\widetilde{a}), \ldots, \sigma_{2n}(\widetilde{a})) = \left(\frac{1}{2^{\frac1p}} \sigma_1({a}), \frac{1}{2^{\frac1p}}\sigma_1({a}), \ldots, \frac{1}{2^{\frac1p}} \sigma_{n}({a}), \frac{1}{2^{\frac1p}} \sigma_{n}({a})\right),$$ and $$(\sigma_1(\gamma  z), \ldots, \sigma_{2n}(\gamma  z)) = \left(\frac{\gamma}{2^{\frac1p}},\frac{\gamma}{2^{\frac1p}},0, \ldots, 0\right).$$ We therefore have $$ (\gamma -\sigma_1(a))^p + 1-\sigma_1(a)^p= (\gamma -\sigma_1(a))^p+\sum_{j=2}^n \sigma_j(a)^p $$ $$= 2 \left(\frac{1}{2^{\frac1p}}\right)^p (\gamma -\sigma_1(a))^p+ 2 \sum_{j=2}^n \left(\frac{1}{2^{\frac1p}}\right)^p \sigma_j(a)^p $$ $$= \| \hbox{diag} (\sigma_1(\widetilde{a}), \ldots, \sigma_{2n}(\widetilde{a})) - \hbox{diag} (\sigma_1(\gamma z), \ldots, \sigma_{2n}(\gamma z)) \|_p^p \leq \|\widetilde{a}-\gamma z\|_p^p = g_{\widetilde{a}} (z),$$ which proves \eqref{eq minimum value qwidetildea}.\smallskip

Let us also observe that, given a minimal partial isometry $v\in \mathcal{U}_{min} (H)$ satisfying $f_a(v) = \min_{e\in \mathcal{U}_{min} (H)} f_a (e)$, the matrix $\widetilde{v} = \frac{1}{2^{\frac1p}} \left(
                                                                                                              \begin{array}{cc}
                                                                                                                0 & v \\
                                                                                                                v^* & 0 \\
                                                                                                              \end{array}
                                                                                                            \right)$
lies in $\mathcal{U}_{2}^{sym} (M_{2n}(\mathbb{C}))$ and, by orthogonality, the Clarkson-McCarthy inequalities (compare \eqref{Clarkson-McCarthy ineq 0pleq2} and \eqref{Clarkson-McCarthy ineq 2p infty}), and the invariance of the $p$-norm under taking the involution $*$ (see \cite[Theorem 1.3]{McCarthy67}), we get $$g_{\widetilde{a}} (\widetilde{v}) = \| \widetilde{a} -\gamma \widetilde{v}\|_p^p = \left(\frac{1}{2^{\frac1p}}\right)^p \|a-\gamma  v\|_p^p + \left(\frac{1}{2^{\frac1p}}\right)^p \|a^*- \gamma  v^*\|_p^p  $$ $$= \|a- \gamma v\|_p^p = \min_{e\in \mathcal{U}_{min} (H)} f_a (e) = \min_{z\in \mathcal{U}_{2}^{sym} (M_{2n}(\mathbb{C}))}\!\! g_{\widetilde{a}} (z).$$

Now, let us fix two orthogonal minimal projections $q_1,q_2\in \hbox{Proj}_1(M_{2n}(\mathbb{C}))$ and set $b=  \frac{1}{2^{\frac1p}} q_1 - \frac{1}{2^{\frac1p}} q_2 \in S(C_p(M_{2n}(\mathbb{C})))$. It is clear that, accordingly to the terminology in \cite{BhatSemr96}, the \emph{unitary orbit} of $b$ in $S(C_p(M_{2n}(\mathbb{C})))$ is the set $$\left\{ \widetilde{u} b \widetilde{u}^* : \widetilde{u} \hbox{ unitary in } M_{2n}(\mathbb{C}) \right\}$$ and coincides with our set $\mathcal{U}_{2}^{sym} (M_{2n}(\mathbb{C}))$. R. Bhatia and P. \v{S}emrl prove in \cite[Theorem 1]{BhatSemr96} that if an element $b_0\in \mathcal{U}_{2}^{sym} (M_{2n}(\mathbb{C}))$ is a critical point for the mapping $g_{\widetilde{a}},$ then $b_0$ commutes with $\widetilde{a}$. By applying this conclusion to $b_0=\widetilde{v},$ we deduce that $\widetilde{v}$ and $\widetilde{a}$ commute, and consequently, $a^* v = v^* a$ and $ a v^* = v a^*$.\smallskip

Furthermore, since $\widetilde{v}$ and $\widetilde{a}$ commute, $\widetilde{a}$ is hermitian, and $\widetilde{v}$ is a positive multiple of a rank-2 hermitian partial isometry, we can easily deduce the existence of two orthogonal minimal projections $r_1$ and $r_2$ in $M_{2n} (\mathbb{C})$ satisfying $\widetilde{v} = \frac{1}{2^{\frac1p}} (r_1-r_2)$, $r_1$ and $r_2$ commute with $\widetilde{a},$ and consequently, $\widetilde{a} = r_1 \widetilde{a} r_1 + r_2 \widetilde{a} r_2 + (1-r_1-r_2) \widetilde{a} (1-r_1-r_2)$. By applying a joint spectral resolution of $\widetilde{a}$ and $\widetilde{v}$, and keeping the notation in Lemma \ref{l lp commutative}, we can represent $\widetilde{a}$ and $\widetilde{v}$ in a commutative $\ell_p^{2n}$ space, with a representation satisfying the following properties:
\begin{enumerate}[$(1)$]
\item $\widetilde{a}= \left(\frac{1}{2^{\frac1p}} \sigma_1(a) , \ldots, \frac{1}{2^{\frac1p}} \sigma_{n}(a),-\frac{1}{2^{\frac1p}} \sigma_{n}(a),\ldots,-\frac{1}{2^{\frac1p}} \sigma_{1}(a) \right)$;
\item $\widetilde{v}\in \mathcal{U}_2^s(\ell_p^{2n});$
\item $\displaystyle\min_{z\in  \mathcal{U}_2^s(\ell_p^{2n})} h_{\widetilde{a}}(z) $ $\displaystyle=(\gamma-\sigma_1(a))^p+\sum_{j=2}^n \sigma_j^p(a) $ $\displaystyle= (\gamma-\sigma_1(a))^p + 1-\sigma_1^p(a) $ $=\|\widetilde{a}-\gamma \widetilde{v}\|_p^p$ $\displaystyle=\min_{z\in \mathcal{U}_{2}^{sym} (M_{2n}(\mathbb{C}))} g_{\widetilde{a}} (z)$.
\end{enumerate}

We are in position to apply Lemma \ref{l lp commutative}. We therefore conclude that $\widetilde{v}= \frac{1}{2^{\frac1p}}(e_i-e_j)$ with $1\leq i\leq n$, $n+1\leq j\leq 2n$, $\sigma_i(a) = \sigma_j(a) = \sigma_1(a)$, which implies that $v \leq e_m$. This concludes the proof of \eqref{eq points of minimum value for fa}.
\end{proof}

We shall need later an appropriate generalization of \cite[Lemma in page 3]{Nag2013}. Our next result extends the just quoted result in \cite{Nag2013} to general matrices in the unit sphere of $C_p(M_n(\mathbb{C}))$.

\begin{proposition}\label{p a la Nagy} Let $H$ be a finite dimensional complex Hilbert space, and let $p$ be a real number in $(1, \infty)\backslash\{2\}$. We fix a positive $\gamma \geq 1$. If $a,b\in S(C_p(H))$ satisfy $\|a-\gamma e\|_p = \|b-\gamma e\|_p$ for every $e$ in $\mathcal{U}_{min} (H)$, then $a = b$.
\end{proposition}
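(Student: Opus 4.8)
The strategy is to reconstruct both $a$ and $b$ from the data $\{\|a-\gamma e\|_p : e\in\mathcal{U}_{min}(H)\}=\{\|b-\gamma e\|_p : e\in\mathcal{U}_{min}(H)\}$ (as functions on $\mathcal{U}_{min}(H)$, via $e\mapsto\|a-\gamma e\|_p=\|b-\gamma e\|_p$), using Proposition \ref{p min value} as the main lever. First I would read off the singular value data. By Proposition \ref{p min value}, the minimum value of $f_a(e)=\|a-\gamma e\|_p^p$ over $\mathcal{U}_{min}(H)$ equals $(\gamma-\sigma_1(a))^p+1-\sigma_1(a)^p$, and since the function $t\mapsto(\gamma-t)^p+1-t^p$ is strictly decreasing on $(0,1]$ (as used in the proof of Lemma \ref{l lp commutative}), the value $\min f_a$ determines $\sigma_1(a)$, hence $\sigma_1(a)=\sigma_1(b)$. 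Moreover Proposition \ref{p min value} identifies the minimizer set: $f_a$ attains its minimum exactly at those $v\in\mathcal{U}_{min}(H)$ with $v\le e_m(a)$, where $e_m(a)=\sum_{j=1}^{j_0}e_j$ is the ``top eigenprojection'' of $|a|$ viewed the right way; more precisely $e_m(a)$ is the support partial isometry of the part of $a$ of largest singular value. So the hypothesis forces $e_m(a)=e_m(b)=:e_m$: a minimal partial isometry $v$ satisfies $v\le e_m(a)$ iff it minimizes $f_a$ iff it minimizes $f_b$ iff $v\le e_m(b)$, and a rank-$k$ partial isometry is determined by the set of minimal partial isometries under it.

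Next I would iterate to peel off the remaining singular values and the rest of the ``polar/spectral'' structure. Write $a=\sigma_1(a)e_m + a'$ with $a'\perp e_m$ (meaning $a'$ has support partial isometry orthogonal to $e_m$ on both sides) and similarly $b=\sigma_1(b)e_m+b'$. For a minimal partial isometry $v$ with $v\le e_m$, one computes $\|a-\gamma v\|_p^p=\|\,(\sigma_1(a)-\gamma)v\,\|_p^p+\|a'+\sigma_1(a)(e_m-v)\|_p^p$ by orthogonality, which is constant and gives no new info; the new information comes from $v$ that are ``half in'' $e_m$. The cleanest route is: having matched $e_m$ and $\sigma_1$, consider the compressed problem. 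For $v\in\mathcal{U}_{min}(H)$ with $v\perp e_m$ (i.e.\ $v$ supported in the complement), $\|a-\gamma v\|_p^p=\|a-\sigma_1(a)e_m - \gamma v\|_p^p + (\hbox{term from }\sigma_1(a)e_m)$ again by orthogonality, so one reduces to the function $f_{a'}$ restricted to the sub-Hilbert-space $(1-e_me_m^*)H$ or $H(1-e_m^*e_m)$ — here one must be a little careful because $a'$ need not be square; but $\|a-\gamma v\|_p$ for $v$ orthogonal to $e_m$ still sees only the compression of $a$ to the complementary corner, which is again an element of some $C_p$ of the appropriate (rectangular, or after padding, square) space. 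Matching these restricted functions and applying the already-proven case inductively on $\mathrm{rank}(a)+\mathrm{rank}(b)$ (base case rank $1$, where $a,b\in\mathcal{U}_{min}(H)$ and the statement is essentially Wigner-type / follows directly since $\|a-\gamma a\|_p$ is uniquely minimal at $e=a$) yields that $a$ and $b$ have the same ``peeled'' pieces, hence the same singular values and the same system of eigen–partial-isometries, so $a=b$.

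The subtlety — and the step I expect to be the main obstacle — is that merely matching singular values and the support structure at each stage is \emph{not} enough on its own: two operators can have the same singular values and the same top eigenprojection and still differ (e.g.\ $a$ and $ua$ for a unitary $u$ fixing $e_m$). So the induction must genuinely transfer \emph{the whole matrix}, not just numbers. The way to force this is to use minimal partial isometries $v$ that are neither under $e_m$ nor orthogonal to it — the ``off-diagonal'' ones, exactly as in Remark \ref{remark on a couple of minimal partial isometries} and as exploited in Lemma \ref{l lp commutative}(b) via Wielandt's device. For such $v$, the value $\|a-\gamma v\|_p$ depends on the transition-probability-type quantities $\langle \xi | \widetilde\xi\rangle\langle\widetilde\eta|\eta\rangle$ between the eigenvectors of $a$ and those of $v$, and letting $v$ range over a rich enough family pins down $a$ completely. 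Concretely I would, after reducing to a two-dimensional corner (as in Remark \ref{remark on a couple of minimal partial isometries}), invoke the Bhatia–Šemrl critical-point analysis exactly as in the proof of Proposition \ref{p min value} to conclude that any $v$ realizing the relevant minimum must commute (in the Wielandt-doubled picture) with $\widetilde a$, then use Lemma \ref{l lp commutative} to force the inner products to take the extreme values $0$ or $1$, which identifies the eigenvectors of $a$ with those of $b$ one at a time. Thus the overall scheme is: (i) match $\sigma_1$ and $e_m$ via Proposition \ref{p min value}; (ii) by orthogonal decomposition and induction on rank, match all singular values and all eigen–partial-isometry supports; (iii) use off-diagonal test elements $v$ together with Lemma \ref{l lp commutative} and the Bhatia–Šemrl/Wielandt machinery to eliminate the residual unitary ambiguity and conclude $a=b$.
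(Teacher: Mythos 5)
Your steps (i) and (ii) reproduce the paper's proof: Proposition \ref{p min value} yields $\sigma_1(a)=\sigma_1(b)$ (via the strict monotonicity of $t\mapsto(\gamma-t)^p+1-t^p$) and $e_m(a)=e_m(b)=:e_m$, hence $e_me_m^*ae_m^*e_m=\sigma_1(a)e_m=v_mv_m^*bv_m^*v_m$; then for minimal $e\perp e_m$ orthogonality splits $\|a-\gamma e\|_p^p$ into the (common) top-block term plus $\|(1-e_me_m^*)a(1-e_m^*e_m)-\gamma e\|_p^p$, and after renormalising (the new constant is still $\geq 1$) the induction hypothesis --- the paper inducts on $\dim H$ rather than on rank, an immaterial difference --- gives equality of the two complementary corners, whence $a=b$. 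Your step (iii) is unnecessary: the ``residual unitary ambiguity'' you fear does not arise, because the induction hypothesis is the full identity principle on the smaller corner and therefore transfers the entire compressed operator, not merely its singular values and supports, while the top block at each stage is the completely rigid element $\sigma_1(a)e_m$. So no off-diagonal test isometries and no further Bhatia--\v{S}emrl/Wielandt analysis are needed beyond what is already packaged inside Proposition \ref{p min value}.
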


\begin{proof} Let $a,b\in S(C_p(H))$ satisfying $\|a-\gamma  e\|_p = \|b-\gamma  e\|_p$ for every $e\in\mathcal{U}_{min} (H)$. Our aim is to show that $a= b$. We shall argue by induction on the dimension of $H$. The case dim$(H)=1$ is clear. Let us write $\displaystyle a = \sum_{j=1}^n \sigma_j(a) e_j$, and $\displaystyle  b = \sum_{j=1}^n \sigma_j(b) v_j,$ where $e_1,\ldots,e_n$ and $v_1,\ldots,v_n$ are two families of mutually orthogonal minimal partial isometries, $\sigma_1(a)\geq \ldots \geq \sigma_n(a) \geq 0$ and $\sigma_1(b)\geq \ldots \geq \sigma_n(b) \geq 0$ are the singular values of $a$ and $b$, respectively. As before $\displaystyle e_m = \sum_{j, \sigma_1(a) = \sigma_j(a)} e_j$ and $\displaystyle  v_m = \sum_{j, \sigma_1(b) = \sigma_j(b)} v_j.$ It follows from the assumptions that the mappings $f_a, f_b : \mathcal{U}_{min} (H)\to \mathbb{R}_0^+,$ $ f_a(e)=\|a-\gamma e\|_p^p=\|b-\gamma e\|_p^p=f_b(e)$ coincide. Thus by Proposition \ref{p min value}\eqref{eq minimum value fa} we have $$\min_{e\in \mathcal{U}_{min} (H)}\!\! f_a (e)  = (\gamma -\sigma_1(a))^p + 1-\sigma_1(a)^p= \min_{e\in \mathcal{U}_{min} (H)}\!\! f_b (e)  = (\gamma -\sigma_1(b))^p + 1-\sigma_1(b)^p,$$ and by Proposition \ref{p min value}\eqref{eq points of minimum value for fa} $$\hbox{$f_a$ attains its minimum value at $v\in \mathcal{U}_{min} (H)$}\Leftrightarrow v\leq e_m,$$ and $$\hbox{$f_b$ attains its minimum value at $v\in \mathcal{U}_{min} (H)$}\Leftrightarrow v\leq v_m.$$ \smallskip

The equality $f_a=f_b$ now implies that $e_m= v_m$.\smallskip

As observed by G. Nagy in the proof of \cite[Lemma in page 3]{Nag2013}, the values of $\sigma_1(a)$ and $\sigma_1(b)$ can be recovered from the mappings $f_a$ and $f_b$. Namely, the function $t\mapsto (\gamma -t)^p+ 1 -t^p$ ($t\in ]0, 1]$) is strictly decreasing and thus injective. Therefore, the equality $$ (\gamma -\sigma_1(a))^p + 1-\sigma_1(a)^p=  (\gamma -\sigma_1(b))^p + 1-\sigma_1(b)^p,$$ holds if and only if $\sigma_1(a)=\sigma_1(b)$. Therefore, $e_m e_m^*a e_m^* e_m = v_m v_m^*b v_m^* v_m$. If $\sigma_1(a)=\sigma_1(b)=1 $ or $e_m e_m^*=1$ we get $a=b$. We may assume that $\sigma_1(a)=\sigma_1(b)\neq 1$ and $e_m e_m^*\neq 1$, and hence $$0< \|(1-e_m e_m^*)b (1-e_m^* e_m)\|_p = \|(1-e_m e_m^*)a (1-e_m^* e_m)\|_p<1.$$

Now, given a minimal partial isometry $e\in \mathcal{U}_{min} (H)$ with $e\perp e_m=v_m$, we deduce from the orthogonality of $e_m e_m^*a e_m^* e_m = v_m v_m^*b v_m^* v_m$ and $e$ that $$\|e_m e_m^*a e_m^* e_m\|_p^p + \|(1-e_m e_m^*)a (1-e_m^* e_m) - \gamma e\|_p^p =\|a-\gamma e\|_p^p  $$ $$= \|b-\gamma e\|_p^p= \|e_m e_m^*b e_m^* e_m\|_p^p + \|(1-e_m e_m^*)b (1-e_m^* e_m) -\gamma e\|_p^p,$$ and consequently $$\left\|\frac{(1-e_m e_m^*)a (1-e_m^* e_m)}{\|(1-e_m e_m^*)a (1-e_m^* e_m)\|_p} - \frac{\gamma}{\|(1-e_m e_m^*)a (1-e_m^* e_m)\|_p} e\right\|_p $$ $$= \left\|\frac{(1-e_m e_m^*)b (1-e_m^* e_m)}{\|(1-e_m e_m^*)b (1-e_m^* e_m)\|_p} -\frac{\gamma}{\|(1-e_m e_m^*)b (1-e_m^* e_m)\|_p} e\right\|_p^p,$$ for every minimal partial isometry $e\in \mathcal{U}_{min} (H)$ with $e\perp e_m=v_m$. The induction hypothesis implies that $(1-e_m e_m^*)b (1-e_m^* e_m) = (1-e_m e_m^*) a (1-e_m^* e_m)$, which proves $a=b$ and finishes the proof.
\end{proof}

The case in which $H$ is a 2-dimensional complex Hilbert space is treated independently.

\begin{theorem}\label{t Tingley's for Cp with H 2dimensional} Let $H$ be a 2-dimensional complex Hilbert space, let
$p\in (1, \infty)\backslash\{2\}$ be a fixed real number, and let $\Delta: S(C_p(H))\to S(C_p(H))$ be a surjective isometry.
Then there exists a complex linear or a conjugate linear surjective isometry $T:C_p(H)\to C_p(H)$ whose restriction
to $S(C_p(H))$ is $\Delta$.
\end{theorem}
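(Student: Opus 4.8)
The plan is to reduce, using the phase–rigidity already obtained on minimal partial isometries, to a Wigner--type statement, and then to upgrade from $\mathcal{U}_{min}(H)$ to the whole sphere by means of the identity principle in Proposition~\ref{p a la Nagy}. We identify $C_p(H)$ with $M_2(\mathbb{C})$ endowed with the $p$-norm, and we use freely that, for any unitaries $u,v\in M_2(\mathbb{C})$, the maps $x\mapsto uxv$ and $x\mapsto ux^{t}v$ are complex linear surjective isometries of $C_p(H)$, while $x\mapsto ux^{*}v$ and $x\mapsto u\overline{x}v$ are conjugate linear surjective isometries (this being a direct consequence of the invariance of the singular values under unitary multiplication, transposition and entrywise conjugation). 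By Proposition~\ref{p minimal partial isometries uniformly homogeneous}, after composing $\Delta$ with the conjugate linear isometry $x\mapsto x^{*}$ if necessary, we may and shall assume that $\Delta(\lambda v)=\lambda\,\Delta(v)$ for every $\lambda\in\mathbb{T}$ and every $v\in\mathcal{U}_{min}(H)$.

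The heart of the matter is to understand the metric structure that $\Delta$ preserves on $\mathcal{U}_{min}(H)$. Writing the minimal partial isometries in the form $v_1=\eta_1\otimes\xi_1$, $v_2=\eta_2\otimes\xi_2$ with $\eta_i,\xi_i$ unit vectors, and setting $s=\langle\eta_1,\eta_2\rangle$, $t=\langle\xi_1,\xi_2\rangle$ (as in Remark~\ref{remark on a couple of minimal partial isometries}), a direct $2\times2$ computation yields $\mathrm{tr}\big((v_1-\lambda v_2)^{*}(v_1-\lambda v_2)\big)=2-2\,\mathrm{Re}(\lambda\, s\,\overline{t})$ and $|\det(v_1-\lambda v_2)|^{2}=(1-|s|^{2})(1-|t|^{2})$ for every $\lambda\in\mathbb{T}$. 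Since the two singular values of a $2\times2$ matrix are recovered from its Hilbert--Schmidt norm together with the modulus of its determinant, the function $\lambda\mapsto\|v_1-\lambda v_2\|_{p}$ is completely determined by the three real data $|s|\,|t|$, $(1-|s|^{2})(1-|t|^{2})$ and $\arg(s\,\overline{t})$, and conversely (here one uses $p\neq2$, and one treats separately the degenerate configurations where $s=0$ or $t=0$) this function determines those three data. As $\Delta(\lambda v_2)=\lambda\,\Delta(v_2)$ and $\Delta$ is an isometry, $\Delta$ preserves the function $\lambda\mapsto\|v_1-\lambda v_2\|_{p}$, hence it preserves the unordered pair of transition probabilities $\{|s|^{2},|t|^{2}\}=\{|\langle\eta_1,\eta_2\rangle|^{2},|\langle\xi_1,\xi_2\rangle|^{2}\}$ together with the phase datum $\arg(s\,\overline{t})$. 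A connectedness argument (over the connected manifold $\mathcal{U}_{min}(H)$ and over the space of minimal projections of $H$) then shows that this behaviour is globally coherent, so that, after composing $\Delta$ with the linear isometry $x\mapsto x^{t}$ if necessary --- which preserves the normalization of the previous paragraph and interchanges the roles of the range and the initial projection of a minimal partial isometry --- $\Delta$ becomes compatible with the maps $v\mapsto vv^{*}$ and $v\mapsto v^{*}v$, inducing on the set of minimal projections of $H$ two bijections which preserve transition probabilities and which, thanks to the phase datum, are simultaneously of unitary type or simultaneously of antiunitary type.

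At this point I would invoke the generalization of Wigner's theorem of L.\ Moln\'ar \cite{Mol2002} (valid in dimension two): it furnishes unitaries, or antiunitaries, $u$ and $v$ on $H$ such that $\Delta(w)=uwv$ for all $w\in\mathcal{U}_{min}(H)$ in the first case, respectively $\Delta(w)=u\overline{w}v$ in the second. Absorbing also the transpose and adjoint maps used above, we obtain a complex linear or conjugate linear surjective isometry $T_{0}$ of $C_p(H)$ with $T_{0}|_{\mathcal{U}_{min}(H)}=\Delta|_{\mathcal{U}_{min}(H)}$. Replacing $\Delta$ by $T_{0}^{-1}\circ\Delta$, which by Corollary~\ref{c minimal partial isometries in arbitrary dimension} and the linearity of $T_{0}$ is again a surjective isometry of $S(C_p(H))$, we may assume that $\Delta(e)=e$ for every $e\in\mathcal{U}_{min}(H)$. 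For an arbitrary $a\in S(C_p(H))$ we then have $\|a-e\|_{p}=\|\Delta(a)-\Delta(e)\|_{p}=\|\Delta(a)-e\|_{p}$ for every $e\in\mathcal{U}_{min}(H)$, whence Proposition~\ref{p a la Nagy} applied with $\gamma=1$ gives $\Delta(a)=a$. Thus $\Delta$ is the identity on $S(C_p(H))$, and undoing the compositions with $T_{0}$, $x\mapsto x^{t}$ and $x\mapsto x^{*}$ exhibits the original $\Delta$ as the restriction to $S(C_p(H))$ of a complex linear or conjugate linear surjective isometry of $C_p(H)$.

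I expect the main obstacle to be the middle step: extracting from the one–parameter families $\lambda\mapsto\|v_1-\lambda v_2\|_{p}$ exactly the transition–probability and phase information required, disposing cleanly of the degenerate relative positions, and verifying that the data so collected meet the hypotheses of Moln\'ar's theorem --- in particular that the two induced symmetries of the set of minimal projections are of the same (unitary or antiunitary) kind, so that they assemble into a single complex linear or conjugate linear isometry of $C_p(H)$ rather than an inadmissible mixed transformation.
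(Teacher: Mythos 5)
Your proposal is correct in outline but follows a genuinely different route from the paper. The paper proves the two-dimensional case \emph{without} Moln{\'a}r's theorem: after normalising so that $\Delta$ fixes the two diagonal minimal projections and is $\mathbb{T}$-homogeneous, it uses the minimum-value analysis of Proposition \ref{p min value} (applied to $\frac{1}{2^{1/p}}\,\mathbf{1}$ and to $\frac{1}{2^{1/p}}u$ for unitaries $u$) to show that minimal projections go to minimal projections and that partial isometries dominated by a unitary are preserved, then pins $\Delta$ down on a net of reference projections by explicit $2\times 2$ distance computations, and only at the end invokes Proposition \ref{p a la Nagy}. In the paper's architecture, Theorem \ref{t Tingley's for Cp with H 2dimensional} is the \emph{input} to Proposition \ref{p minimal partial isometries relative position} and Corollary \ref{c prop 2.8 linear or conjugate linear}, which supply the transition-probability preservation needed for Moln{\'a}r's theorem in the general case. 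You instead derive that preservation directly in dimension two from the formula $\|v_1-\lambda v_2\|_p^p=G\bigl(2-2\,\mathrm{Re}(\lambda\,\mathrm{tr}(v_1^*v_2)),\,|\det(v_1-\lambda v_2)|\bigr)$ and apply Moln{\'a}r already here; this is legitimate (none of the ingredients you cite depend on Theorem \ref{t Tingley's for Cp with H 2dimensional}, so there is no circularity), and if carried out it would in fact render the paper's separate treatment of the $2$-dimensional case, and Proposition \ref{p minimal partial isometries relative position}, essentially redundant.

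Two remarks on the crux you flag. First, the recovery step does work, but the clean statement is stronger and simpler than the one you aim at: since $G(T,d)$ is strictly increasing in $T$ for fixed $d$ (one checks $\partial_T(\sigma_1^p+\sigma_2^p)=\frac{p}{2}\frac{\sigma_1^{p}-\sigma_2^{p}}{\sigma_1^{2}-\sigma_2^{2}}>0$) and strictly monotone in $d$ for fixed $T$ precisely because $p\neq 2$, the function $\lambda\mapsto\|v_1-\lambda v_2\|_p$ determines $\mathrm{tr}(v_1^*v_2)$ itself (its argument from the location of the maximum, then $d$ from the value where $\mathrm{Re}(\lambda\,\mathrm{tr}(v_1^*v_2))=0$, then its modulus; the degenerate case $\mathrm{tr}(v_1^*v_2)=0$ is exactly constancy of the function). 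Combined with $\Delta(\lambda v_2)=\lambda\Delta(v_2)$ this gives $\mathrm{tr}(\Delta(v_1)^*\Delta(v_2))=\mathrm{tr}(v_1^*v_2)$ directly, which is the \emph{exact} hypothesis of Theorem \ref{Wigner thm min pi}. Consequently your entire detour through the unordered pair $\{|s|^2,|t|^2\}$, the induced bijections $v\mapsto vv^*$ and $v\mapsto v^*v$ on minimal projections, the extra composition with the transpose, and the connectedness argument is unnecessary and is the one place where your write-up is genuinely muddled; Moln{\'a}r's theorem already sorts the map into the form $e\mapsto\widehat{u}e\widehat{v}$ or $e\mapsto\widehat{u}e^*\widehat{v}$ for you. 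Second, the monotonicity facts above must be stated and proved (they are where $p\neq 2$ enters); once they are, the remainder of your argument --- normalisation via Proposition \ref{p minimal partial isometries uniformly homogeneous}, Moln{\'a}r, and the identity principle of Proposition \ref{p a la Nagy} with $\gamma=1$ --- is sound.
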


\begin{proof} We can identify $C_p(H)$ with $M_2(\mathbb{C})$ equipped with the norm $\|.\|_p$. The elements $\Delta \left(
                                                                          \begin{array}{cc}
                                                                            1 & 0 \\
                                                                            0 & 0 \\
                                                                          \end{array}
                                                                        \right)$ and $\Delta \left(
                                                                          \begin{array}{cc}
                                                                            0 & 0 \\
                                                                            0 & 1 \\
                                                                          \end{array}
                                                                        \right)$ are two orthogonal minimal partial isometries. By composing $\Delta$ with a surjective linear isometry on $M_2(\mathbb{C})$ we may assume that $\Delta \left(
                                                                          \begin{array}{cc}
                                                                            1 & 0 \\
                                                                            0 & 0 \\
                                                                          \end{array}
                                                                        \right) = \left(
                                                                          \begin{array}{cc}
                                                                            1 & 0 \\
                                                                            0 & 0 \\
                                                                          \end{array}
                                                                        \right)$ and $\Delta \left(
                                                                          \begin{array}{cc}
                                                                            0 & 0 \\
                                                                            0 & 1 \\
                                                                          \end{array}
                                                                        \right) = \left(
                                                                          \begin{array}{cc}
                                                                            0 & 0 \\
                                                                            0 & 1 \\
                                                                          \end{array}
                                                                        \right).$ We may also assume via Proposition \ref{p minimal partial isometries uniformly homogeneous} that $\Delta (\lambda e ) = \lambda \Delta(e)$ for every $e\in \mathcal{U}_{min} (H)$, $\lambda\in\mathbb{T}$.\smallskip

We set $\displaystyle a = \frac{1}{2^{\frac1p}} \left(
                                                                          \begin{array}{cc}
                                                                            1 & 0 \\
                                                                            0 & 1 \\
                                                                          \end{array}
                                                                        \right) =\sum_{j=1}^2 \sigma_j(a) e_j$ and $\displaystyle b = \Delta(a)=\sum_{j=1}^2 \sigma_j(b) v_j,$ where $\{e_1,e_2\}$ and $\{v_1,v_2\}$ are two families of mutually orthogonal minimal partial isometries, while $\sigma_j(a)$ and $\sigma_j(b)$ stand for the singular values of $a$ and $b$, respectively.
Keeping the notation employed in Proposition \ref{p min value} we denote $\displaystyle e_m = \sum_{j, \sigma_1(a) = \sigma_j(a) } e_j\in B(H)$ and $\displaystyle v_m = \sum_{j, \sigma_1(b) = \sigma_j(b) } v_j\in B(H)$. Clearly $e_m = 1$ because $\sigma_1(a) =\sigma_2(a)$.\smallskip

Let us consider the mappings $f_a, f_b : \mathcal{U}_{min} (H)\to \mathbb{R}_0^+$ defined by $f_a(e):=\|a- e\|_p^p$ and $f_b(e):=\|b- e\|_p^p$. Proposition \ref{p minimal partial isometries in arbitrary dimension}$(a)$ assures that $\Delta (\mathcal{U}_{min} (H))= \mathcal{U}_{min} (H).$ Since $f_a (e)=\| a- e\|_p^p =\|\Delta(a)-\Delta(e)\|_p^p=\|b-\Delta(e)\|_p^p = f_b (\Delta(e)),$ we deduce that $$\left(1-\frac{1}{2^{\frac1p}}\right)^p +\frac12 =\min_{e\in \mathcal{U}_{min} (H)}\!\! f_a (e) = \min_{e\in \mathcal{U}_{min} (H)}\!\! f_b (e) = (1-\sigma_1(b))^p +\sigma_2(b)^p.$$ Then it follows that $\sigma_1(b) = \sigma_2(b) = \frac{1}{2^{\frac1p}}$, and consequently $v_m = 1 = e_m$ and $a=b=\Delta(a)$. Furthermore, Proposition \ref{p minimal partial isometries in arbitrary dimension}$(b)$ gives
$$\left\{ v\in \mathcal{U}_{min} (H) : f_a(v) = \min_{e\in \mathcal{U}_{min} (H)}\!\! f_a (e) \right\} = \left\{ v\in \mathcal{U}_{min} (H) : v\leq 1 \right\}$$ $$=  \left\{ \hbox{minimal projections in } B(H)  \right\}= \left\{ v\in \mathcal{U}_{min} (H) : f_b(v) = \min_{e\in \mathcal{U}_{min} (H)}\!\! f_b (e) \right\},$$ which guarantees that $\Delta$ maps minimal projections in $B(H)$ to minimal projections in $B(H)$. \smallskip

Since $\Delta \left(
                \begin{array}{cc}
                  \frac12 & \frac12 \\
                  \frac12 & \frac12 \\
                \end{array}
              \right)$ must be a minimal projection, there exist $t\in (0,1)$ and $c\in \mathbb{T}$ satisfying $\Delta \left(
                \begin{array}{cc}
                  \frac12 & \frac12 \\
                  \frac12 & \frac12 \\
                \end{array}
              \right) = \left(
                          \begin{array}{cc}
                            t & c \sqrt{t(1-t)} \\
                            \overline{c} \sqrt{t(1-t)} & 1-t \\
                          \end{array}
                        \right).$ The hypothesis implies that $$2 (1-t)^{\frac{p}{2}}=\left\| \left(
                          \begin{array}{cc}
                            t-1 & c \sqrt{t(1-t)} \\
                            \overline{c} \sqrt{t(1-t)} & 1-t \\
                          \end{array}
                        \right)
               \right\|_p^p =$$
$$\left\| \left(
                          \begin{array}{cc}
                            t & c \sqrt{t(1-t)} \\
                            \overline{c} \sqrt{t(1-t)} & 1-t \\
                          \end{array}
                        \right) - \left(
                          \begin{array}{cc}
                            1 & 0 \\
                            0 & 0 \\
                          \end{array}
                        \right)
               \right\|_p^p =\left\| \Delta \left(
                \begin{array}{cc}
                  \frac12 & \frac12 \\
                  \frac12 & \frac12 \\
                \end{array}
              \right) - \Delta \left(
                          \begin{array}{cc}
                            1 & 0 \\
                            0 & 0 \\
                          \end{array}
                        \right)
               \right\|_p^p $$ $$=  \left\| \left(
                \begin{array}{cc}
                  \frac12 & \frac12 \\
                  \frac12 & \frac12 \\
                \end{array}
              \right) -  \left(
                          \begin{array}{cc}
                            1 & 0 \\
                            0 & 0 \\
                          \end{array}
                        \right)
               \right\|_p^p = \frac{2}{2^{\frac{p}{2}}},$$ and consequently $t=\frac12$ and $\Delta \left(
                \begin{array}{cc}
                  \frac12 & \frac12 \\
                  \frac12 & \frac12 \\
                \end{array}
              \right) = \frac12 \left(
                          \begin{array}{cc}
                            1 & c \\
                            \overline{c}  & 1 \\
                          \end{array}
                        \right).$ \smallskip

We consider the surjective linear isometry $T_0 : B(H) \to B(H)$ defined by $T_0 (x) := \left(
                                                                                          \begin{array}{cc}
                                                                                            1 & 0 \\
                                                                                            0 & c \\
                                                                                          \end{array}
                                                                                        \right) x \left(
                                                                                                    \begin{array}{cc}
                                                                                                      1 & 0 \\
                                                                                                      0 & \overline{c} \\
                                                                                                    \end{array}
                                                                                                  \right)$ and the surjective isometry $\Delta_1 = T_0 \Delta$.
The mapping $\Delta_1$ satisfies $\Delta_1 \left(\begin{array}{cc}
                                                                            1 & 0 \\
                                                                            0 & 0 \\
                                                                          \end{array}
                                                                        \right) = \left(
                                                                          \begin{array}{cc}
                                                                            1 & 0 \\
                                                                            0 & 0 \\
                                                                          \end{array}
                                                                        \right),$ $\Delta_1 \left(
                                                                          \begin{array}{cc}
                                                                            0 & 0 \\
                                                                            0 & 1 \\
                                                                          \end{array}
                                                                        \right) = \left(
                                                                          \begin{array}{cc}
                                                                            0 & 0 \\
                                                                            0 & 1 \\
                                                                          \end{array}
                                                                        \right),$ $\Delta_1 \left(
                \begin{array}{cc}
                  \frac12 & \frac12 \\
                  \frac12 & \frac12 \\
                \end{array}
              \right) = \left(
                \begin{array}{cc}
                  \frac12 & \frac12 \\
                  \frac12 & \frac12 \\
                \end{array}
              \right),$ $\Delta_1 (\lambda e ) = \lambda \Delta_1(e)$ for every $e\in \mathcal{U}_{min} (H)$, $\lambda\in \mathbb{T}$, and $\Delta_1$ maps minimal projections to minimal projections.\smallskip

Now, $\Delta_1 \left(
                \begin{array}{cc}
                  \frac12 & \frac{i}{2} \\
                  -\frac{i}{2} & \frac12 \\
                \end{array}
              \right)$ must be a minimal projection, and thus there exist $s\in (0,1)$ and $c\in \mathbb{T}$ satisfying $\Delta \left(
                \begin{array}{cc}
                  \frac12 & \frac{i}{2} \\
                  -\frac{i}{2} & \frac12 \\
                \end{array}
              \right) = \left(
                          \begin{array}{cc}
                            s & c \sqrt{s(1-s)} \\
                            \overline{c} \sqrt{s(1-s)} & 1-s \\
                          \end{array}
                        \right).$ The identities
                        $$\frac{2}{2^{\frac{p}{2}}}=\left\| \left(
                \begin{array}{cc}
                  0 & \frac{i-1}{2} \\
                  -\frac{i+1}{2} & 0 \\
                \end{array}
              \right)
               \right\|_p^p =\left\| \left(
                \begin{array}{cc}
                  \frac12 & \frac{i}{2} \\
                  -\frac{i}{2} & \frac12 \\
                \end{array}
              \right) -  \left(
                          \begin{array}{cc}
                            \frac12 & \frac12 \\
                            \frac12 & \frac12 \\
                          \end{array}
                        \right)
               \right\|_p^p $$
$$ =\left\| \Delta_1 \left(
                \begin{array}{cc}
                  \frac12 & \frac{i}{2} \\
                  -\frac{i}{2} & \frac12 \\
                \end{array}
              \right) - \Delta_1 \left(
                          \begin{array}{cc}
                            \frac12 & \frac12 \\
                            \frac12 & \frac12 \\
                          \end{array}
                        \right)
               \right\|_p^p $$ $$=  \left\| \left(
                          \begin{array}{cc}
                            s & c \sqrt{s(1-s)} \\
                            \overline{c} \sqrt{s(1-s)} & 1-s \\
                          \end{array}
                        \right) -  \left(
                          \begin{array}{cc}
                            \frac12 & \frac12 \\
                            \frac12 & \frac12 \\
                          \end{array}
                        \right)
               \right\|_p^p$$ $$=  \left\| \left(
                          \begin{array}{cc}
                            s-\frac12 & c \sqrt{s(1-s)}-\frac12 \\
                            \overline{c} \sqrt{s(1-s)}-\frac12 & \frac12-s \\
                          \end{array}
                        \right)
               \right\|_p^p $$ $$= 2 \left( \left(s-\frac12\right)^2 + \left| c \sqrt{s(1-s)} -\frac12 \right|^2  \right)^{\frac{p}{2}},$$ and
$$ \frac{2}{2^{\frac{p}{2}}}= \left\| \left(
                \begin{array}{cc}
                  \frac12 & \frac{i}{2} \\
                  -\frac{i}{2} & \frac12 \\
                \end{array}
              \right) - \left(
                          \begin{array}{cc}
                            1 & 0 \\
                            0 & 0 \\
                          \end{array}
                        \right)
               \right\|_p^p =\left\| \Delta_1 \left(
                \begin{array}{cc}
                  \frac12 & \frac{i}{2} \\
                  -\frac{i}{2} & \frac12 \\
                \end{array}
              \right) - \Delta_1 \left(
                          \begin{array}{cc}
                            1 & 0 \\
                            0 & 0 \\
                          \end{array}
                        \right)
               \right\|_p^p  $$ $$= \left\| \left(
                          \begin{array}{cc}
                            s & c \sqrt{s(1-s)} \\
                            \overline{c} \sqrt{s(1-s)} & 1-s \\
                          \end{array}
                        \right) - \left(
                          \begin{array}{cc}
                            1 & 0 \\
                            0 & 0 \\
                          \end{array}
                        \right)\right\|_p^p = 2 (1-s)^{\frac{p}{2}},$$ guarantee that $s=\frac12$ and $c=\pm i$. We have therefore shown that $$\Delta_1 \left(
                \begin{array}{cc}
                  \frac12 & \frac{i}{2} \\
                  -\frac{i}{2} & \frac12 \\
                \end{array}
              \right)\in \left\{ \left(
                \begin{array}{cc}
                  \frac12 & \frac{i}{2} \\
                  -\frac{i}{2} & \frac12 \\
                \end{array}
              \right), \left(
                \begin{array}{cc}
                  \frac12 & - \frac{i}{2} \\
                  \frac{i}{2} & \frac12 \\
                \end{array}
              \right) \right\}.$$ By composing with the transpose, if necessary, we may assume that $\Delta_1 \left(
                \begin{array}{cc}
                  \frac12 & \frac{i}{2} \\
                  -\frac{i}{2} & \frac12 \\
                \end{array}
              \right) =  \left(
                \begin{array}{cc}
                  \frac12 & \frac{i}{2} \\
                  -\frac{i}{2} & \frac12 \\
                \end{array}
              \right).$\smallskip

We consider next an arbitrary minimal projection $\left(
                          \begin{array}{cc}
                            t & c \sqrt{t(1-t)} \\
                            \overline{c} \sqrt{t(1-t)} & 1-t \\
                          \end{array}
                        \right)$ with $t\in (0,1)$, $c\in \mathbb{T}$. It follows from the above properties that $$\Delta_1 \left(
                          \begin{array}{cc}
                            t & c \sqrt{t(1-t)} \\
                            \overline{c} \sqrt{t(1-t)} & 1-t \\
                          \end{array}
                        \right)  = \left(
                          \begin{array}{cc}
                            s & d \sqrt{s(1-s)} \\
                            \overline{d} \sqrt{s(1-s)} & 1-s \\
                          \end{array}
                        \right),$$ for a unique pair $(s,d)\in (0,1)\times \mathbb{T}$. The equalities $$2 (1-t)^{\frac{p}{2}}= \left\| \left(
                          \begin{array}{cc}
                            t & c \sqrt{t(1-t)} \\
                            \overline{c} \sqrt{t(1-t)} & 1-t \\
                          \end{array}
                        \right) -  \left(
                \begin{array}{cc}
                  1 & 0 \\
                  0 & 0\\
                \end{array}
              \right)  \right\|_p^p$$
$$=\left\| \Delta_1 \left(
                          \begin{array}{cc}
                            t & c \sqrt{t(1-t)} \\
                            \overline{c} \sqrt{t(1-t)} & 1-t \\
                          \end{array}
                        \right) - \Delta_1 \left(
                \begin{array}{cc}
                  1 & 0 \\
                  0 & 0\\
                \end{array}
              \right)  \right\|_p^p $$ $$=\left\| \left(
                          \begin{array}{cc}
                            s & d \sqrt{s(1-s)} \\
                            \overline{d} \sqrt{s(1-s)} & 1-s \\
                          \end{array}
                        \right) -  \left(
                \begin{array}{cc}
                  1 & 0 \\
                  0 & 0\\
                \end{array}
              \right)  \right\|_p^p = 2 (1-s)^{\frac{p}{2}},$$

                        $$2 \left( \left(t-\frac12\right)^2 + \left| c \sqrt{t(1-t)}-\frac{i}{2} \right|^2 \right)^{\frac{p}{2}}$$ $$=\left\| \left(
                          \begin{array}{cc}
                            t & c \sqrt{t(1-t)} \\
                            \overline{c} \sqrt{t(1-t)} & 1-t \\
                          \end{array}
                        \right) -  \left(
                \begin{array}{cc}
                  \frac12 & \frac{i}{2} \\
                  -\frac{i}{2} & \frac12 \\
                \end{array}
              \right)  \right\|_p^p$$ $$=\left\| \Delta_1 \left(
                          \begin{array}{cc}
                            t & c \sqrt{t(1-t)} \\
                            \overline{c} \sqrt{t(1-t)} & 1-t \\
                          \end{array}
                        \right) - \Delta_1 \left(
                \begin{array}{cc}
                  \frac12 & \frac{i}{2} \\
                  -\frac{i}{2} & \frac12 \\
                \end{array}
              \right)  \right\|_p^p $$ $$=\left\|  \left(
                          \begin{array}{cc}
                            s & d \sqrt{s(1-s)} \\
                            \overline{d} \sqrt{s(1-s)} & 1-s \\
                          \end{array}
                        \right) - \left(
                \begin{array}{cc}
                  \frac12 & \frac{i}{2} \\
                  -\frac{i}{2} & \frac12 \\
                \end{array}
              \right)  \right\|_p^p $$ $$2 \left( \left(s-\frac12\right)^2 + \left| d \sqrt{s(1-s)}-\frac{i}{2} \right|^2 \right)^{\frac{p}{2}},$$ and

                        $$2 \left( \left(t-\frac12\right)^2 + \left| c \sqrt{t(1-t)}-\frac{1}{2} \right|^2 \right)^{\frac{p}{2}}$$ $$=\left\| \left(
                          \begin{array}{cc}
                            t & c \sqrt{t(1-t)} \\
                            \overline{c} \sqrt{t(1-t)} & 1-t \\
                          \end{array}
                        \right) -  \left(
                \begin{array}{cc}
                  \frac12 & \frac{1}{2} \\
                  \frac{1}{2} & \frac12 \\
                \end{array}
              \right)  \right\|_p^p$$ $$=\left\| \Delta_1 \left(
                          \begin{array}{cc}
                            t & c \sqrt{t(1-t)} \\
                            \overline{c} \sqrt{t(1-t)} & 1-t \\
                          \end{array}
                        \right) - \Delta_1 \left(
                \begin{array}{cc}
                  \frac12 & \frac{1}{2} \\
                  \frac{1}{2} & \frac12 \\
                \end{array}
              \right)  \right\|_p^p $$ $$=\left\|  \left(
                          \begin{array}{cc}
                            s & d \sqrt{s(1-s)} \\
                            \overline{d} \sqrt{s(1-s)} & 1-s \\
                          \end{array}
                        \right) - \left(
                \begin{array}{cc}
                  \frac12 & \frac{1}{2} \\
                  \frac{1}{2} & \frac12 \\
                \end{array}
              \right)  \right\|_p^p $$ $$=2 \left( \left(s-\frac12\right)^2 + \left| d \sqrt{s(1-s)}-\frac{1}{2} \right|^2 \right)^{\frac{p}{2}},$$ imply that $s=t$ and $c=d$. This shows that $\Delta_1 \left(
                          \begin{array}{cc}
                            t & c \sqrt{t(1-t)} \\
                            \overline{c} \sqrt{t(1-t)} & 1-t \\
                          \end{array}
                        \right) = \left(
                          \begin{array}{cc}
                            t & c \sqrt{t(1-t)} \\
                            \overline{c} \sqrt{t(1-t)} & 1-t \\
                          \end{array}
                        \right)$ for every $t$ and $c$ as above, that is, $\Delta_1$ in the identity mapping on rank one projections. \smallskip

For every unitary $u$ in $B(H)=M_2(\mathbb{C})$, there exist mutually orthogonal minimal projections $q_1,q_2 \in \hbox{Proj}_1(B(H))$ and $s,t\in \mathbb{T}$ such that $u = s q_1 + t q_2$. We set $\displaystyle \widetilde{a} = \frac{1}{2^{\frac1p}} u =\sum_{j=1}^2 \sigma_j(\widetilde{a}) e_j$ and $\displaystyle \widetilde{b} = \Delta_1(\widetilde{a})=\sum_{j=1}^2 \sigma_j(\widetilde{b}) v_j,$ where $\{e_1=s q_1,e_2 = t q_2\}$ and $\{v_1,v_2\}$ are two families of mutually orthogonal minimal partial isometries, while $\sigma_j(\widetilde{a})$ and $\sigma_j(\widetilde{b})$ stand for the singular values of $\widetilde{a}$ and $\widetilde{b}$, respectively.
As before, we denote $\displaystyle e_m = \sum_{j, \sigma_1(\widetilde{a}) = \sigma_j(\widetilde{a}) } e_j=u$ and $\displaystyle v_m = \sum_{j, \sigma_1(\widetilde{b}) = \sigma_j(\widetilde{b}) } v_j\in B(H)$.\smallskip

By repeating the arguments in the third paragraph of this proof to the mappings $f_{\widetilde{a}}, f_{\widetilde{b}} : \mathcal{U}_{min} (H)\to \mathbb{R}_0^+,$ $f_{\widetilde{a}}(e):=\|\widetilde{a}- e\|_p^p$ and $f_{\widetilde{b}}(e):=\|b- e\|_p^p$, we deduce that $\sigma_1(\widetilde{b}) = \sigma_2(\widetilde{b}) = \frac{1}{2^{\frac1p}}$, and consequently $v_m = u = e_m$ and $\widetilde{a}=\widetilde{b}=\Delta_1(\widetilde{a})$, and by Proposition \ref{p minimal partial isometries in arbitrary dimension}$(b)$ we have
$$\left\{ v\in \mathcal{U}_{min} (H) : f_{\widetilde{a}}(v) = \min_{e\in \mathcal{U}_{min} (H)}\!\! f_{\widetilde{a}} (e) \right\} = \left\{ v\in \mathcal{U}_{min} (H) : v\leq u \right\}$$ $$= \left\{ v\in \mathcal{U}_{min} (H) : f_{\widetilde{b}}(v) = \min_{e\in \mathcal{U}_{min} (H)}\!\! f_{\widetilde{b}} (e) \right\},$$ which implies that if $v$ is a minimal projection such that $v\leq u$ then $\Delta_1(v)$ is a minimal projection satisfying $\Delta_1(v)\leq u$.\smallskip

Let $e$ be an arbitrary minimal partial isometry in $B(H)$. Let us find another minimal partial isometry $v\in B(H)$ satisfying $e\perp v$. We consider the unitaries $u_1 = e + v$  and $u_2 = e -v$. Since $e\leq u_j$ for all $j=1,2$, it follows from the conclusion in the above paragraph that $\Delta_1 (e) \leq u_j$ for all $j=1,2$. It can be easily deduced from this and the minimality of $\Delta_1(e)$ that $\Delta_1 (e) = e$. We have therefore shown that $\Delta_1(e) =e$ for every minimal projection $e\in B(H)$.\smallskip

Finally, let $a$ be an element in $S(C_p(H))$. For each $e\in \mathcal{U}_{min} (H)$ we have $$\| \Delta_1(a) - e \|_p =\| \Delta_1(a) - \Delta_1(e) \|_p = \| a - e \|_p,$$ and consequently, an application of Proposition \ref{p a la Nagy} proves that $\Delta_1 (a) = a$, for every $a$ in $S(C_p(H))$, which finishes the proof.
\end{proof}

Let $e$ be a partial isometry in $B(H)$. It is known that $B(H)= B(H)_0(e) \oplus B(H)_1 (e)\oplus B(H)_2(e)$, where $B(H)_0(e)$, $B(H)_1(e)$ and $B(H)_2 (e)$ are the so-called \emph{Peirce subspaces} associated with $e$, which are defined by $$B(H)_2(e) = ee^* B(H) e^*e, B(H)_1(e) = (1-ee^*) B(H) e^*e \oplus ee^* B(H)(1-e^*e),$$ and $$B(H)_0(e) = (1-ee^*) B(H) (1-e^*e)$$. The natural projection of $B(H)$ onto $B(H)_j(e)$ is called the \emph{Peirce $j$-projection}, and it will be denoted by $P_j(e)$. When $e$ is a minimal partial isometry the Peirce subspace $B(H)_2 (e)$ coincides with $\mathbb{C} e$, and in such a case, for each $x\in B(H)$, we shall write $\varphi_e (x)$ for the unique complex number satisfying $P_2(e) (x) =\varphi_e(x) e$.\smallskip

Our next result is a first application of the previous Theorem \ref{t Tingley's for Cp with H 2dimensional}.

\begin{proposition}\label{p minimal partial isometries relative position} Let $H$ and $H'$ be complex Hilbert spaces, let $ p \in(1,\infty)\backslash\{2\}$, and let $\Delta: S(C_p(H))\to S(C_p(H'))$ be a surjective isometry. Suppose $e$ and $v$ are minimal partial isometries in $S(C_p(H))$. Then $P_2(\Delta(e)) (\Delta(v))$ belongs to the set $\{ \varphi_e (v ) \Delta(e) , \overline{\varphi_e (v )} \Delta(e)\},$ equivalently, $ \varphi_{\Delta(e)} (\Delta(v) ) =  \varphi_e (v ) $ or $ \varphi_{\Delta(e)} (\Delta(v) ) =  \overline{\varphi_e (v )},$
and $\| P_0 (\Delta(e)) (\Delta(v))\|_p = \|P_0(e) (v) \|_p$.
\end{proposition}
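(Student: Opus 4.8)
The plan is to reduce the statement to the two-dimensional case already settled in Theorem \ref{t Tingley's for Cp with H 2dimensional}. If $e = v$ the statement is trivial ($\varphi_e(e)=1$ and $P_0(e)(e)=0$), so assume $e \neq v$. By Remark \ref{remark on a couple of minimal partial isometries}, there are orthonormal systems $\{\eta_1,\eta_2\}$ and $\{\xi_1,\xi_2\}$ in $H$ such that, enlarging to orthonormal bases $\{\eta_1,\eta_2\}\cup\{\eta_j:j\in J\}$ and $\{\xi_1,\xi_2\}\cup\{\xi_j:j\in J\}$, the elements $e$ and $v$ both lie in $S\big(\bigcap_{j\in J}\{\eta_j\otimes\xi_j\}^\perp\big)\cong S(C_p(H_1))$ for a $2$-dimensional Hilbert space $H_1$. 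Applying Lemma \ref{l preservation of orthogonality} to the mutually orthogonal family $\{\eta_j\otimes\xi_j:j\in J\}$, their images $\{\Delta(\eta_j\otimes\xi_j):j\in J\}$ are mutually orthogonal minimal partial isometries in $C_p(H')$, and an argument exactly as in Proposition \ref{p minimal partial isometries in arbitrary dimension}$(d)$ (using Lemma \ref{l preservation of orthogonality} applied to $\Delta$ and $\Delta^{-1}$) shows that $\Delta$ maps $S\big(\bigcap_{j\in J}\{\eta_j\otimes\xi_j\}^\perp\big)$ bijectively and isometrically onto $S\big(\bigcap_{j\in J}\{\Delta(\eta_j\otimes\xi_j)\}^\perp\big)\cong S(C_p(H_1'))$ with $H_1'$ also $2$-dimensional.

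Next I would identify both $2$-dimensional corners with $M_2(\mathbb{C})$ under the norm $\|\cdot\|_p$, so that the restriction of $\Delta$ becomes a surjective isometry $\Delta_0:S(C_p(H_1))\to S(C_p(H_1'))$ between copies of $S(C_p(M_2(\mathbb{C})))$. Theorem \ref{t Tingley's for Cp with H 2dimensional} then provides a complex linear or conjugate linear surjective isometry $T_0:C_p(M_2(\mathbb{C}))\to C_p(M_2(\mathbb{C}))$ extending $\Delta_0$; here one should note that although Theorem \ref{t Tingley's for Cp with H 2dimensional} is stated for a self-map, the usual identification of both $2$-dimensional spaces with $M_2(\mathbb{C})$ lets us apply it. Now the structure of surjective linear (resp.\ conjugate linear) isometries of $C_p(M_2(\mathbb{C}))$ for $p\neq 2$ is well understood — they are of the form $x\mapsto u\,x\,w$ or $x\mapsto u\,x^t\,w$ (resp.\ the analogues with $\bar x$) for unitaries $u,w$ — and each such $T_0$ preserves the Peirce decomposition structure: if $T_0$ sends a minimal partial isometry $e$ to $\Delta(e)$, then $T_0$ conjugates $P_2(e),P_1(e),P_0(e)$ into $P_2(\Delta(e)),P_1(\Delta(e)),P_0(\Delta(e))$, and it acts on the scalar $\varphi_e(v)$ either trivially or by conjugation (the conjugation occurring precisely when $T_0$ is conjugate linear or a transpose-type map). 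Since $\|\cdot\|_p$ is invariant under $T_0$, we immediately get $\|P_0(\Delta(e))(\Delta(v))\|_p = \|P_0(e)(v)\|_p$ and $\varphi_{\Delta(e)}(\Delta(v))\in\{\varphi_e(v),\overline{\varphi_e(v)}\}$.

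The final point is that $P_j$ computed inside the $2$-dimensional corner agrees with $P_j$ computed in $B(H)$: this holds because for a minimal partial isometry $e = \eta_1\otimes\xi_1$, the Peirce-$2$ space is $\mathbb{C}e$ regardless of the ambient algebra, and the Peirce-$0$ projection of $v$ (which by Remark \ref{remark on a couple of minimal partial isometries} already lies in the span of $\{\eta_i\otimes\xi_j: i,j\in\{1,2\}\}$) has the same value whether we project in $M_2(\mathbb{C})$ or in $B(H)$. So the identities established in the corner transfer verbatim to the statement.

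\medskip

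The main obstacle I anticipate is not conceptual but organizational: one must be careful that the linear/conjugate-linear isometry $T_0$ furnished by Theorem \ref{t Tingley's for Cp with H 2dimensional} genuinely implements the \emph{same} dichotomy for every pair $e,v$ simultaneously — a priori $T_0$ depends on the chosen $2$-dimensional corner. This is handled by observing that we only need the statement corner by corner (the claim is about a fixed pair $e,v$, which determines the corner), so no global coherence of the $T_0$'s is required here. A secondary technical care is the explicit verification that the four standard forms of surjective (conjugate) linear isometries of $C_p(M_2(\mathbb{C}))$, $p\neq 2$, really do intertwine the Peirce projections and act on $\varphi_e(v)$ by the identity or by complex conjugation; this is a routine computation with $2\times 2$ matrices once one writes $e$ and $v$ as in Remark \ref{remark on a couple of minimal partial isometries}, using that $P_2(e)(x)=\varphi_e(x)e = \langle$ appropriate matrix entries $\rangle$ and that conjugation by a unitary, transposition, and entrywise conjugation each either fix or conjugate that scalar entry.
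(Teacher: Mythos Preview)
Your proposal is correct and follows essentially the same route as the paper's own proof: reduce to a two-dimensional corner via Remark \ref{remark on a couple of minimal partial isometries}, use Lemma \ref{l preservation of orthogonality} and Proposition \ref{p minimal partial isometries in arbitrary dimension} to see that $\Delta$ restricts to a surjective isometry between copies of $S(C_p(M_2(\mathbb{C})))$, invoke Theorem \ref{t Tingley's for Cp with H 2dimensional} to extend to a real-linear isometry $T_0$, and then read off the conclusion from the four standard forms of such isometries. One small correction: in your parenthetical remark, the transpose-type map $x\mapsto u x^t w$ is complex linear and preserves (rather than conjugates) the scalar $\varphi_e(v)$, since the transpose is a triple automorphism; the conjugation of $\varphi_e(v)$ occurs exactly in the two conjugate-linear cases $x\mapsto u\bar{x}w$ and $x\mapsto u x^* w$. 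This does not affect your argument, as the dichotomy $\varphi_{\Delta(e)}(\Delta(v))\in\{\varphi_e(v),\overline{\varphi_e(v)}\}$ still holds across all four forms.
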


\begin{proof} By Remark \ref{remark on a couple of minimal partial isometries} we can find a family of mutually orthogonal minimal partial isometries $\{e_1,e_2\}\cup\{e_j:j\in J\}$ such that $\displaystyle \bigcap_{j\in J} \{ e_j\}^{\perp} \cong C_p(H_1),$ where $H_1$ is a two dimensional complex Hilbert space, $e= e_1$, and $v \in S\left(\displaystyle \bigcap_{j\in J} \{ e_j\}^{\perp}\right) \cong S(C_p(H_1))\subset S(C_p(H))$. By Lemma \ref{l preservation of orthogonality} and Proposition \ref{p minimal partial isometries in arbitrary dimension} we get $$\Delta\left(  \left(\displaystyle \bigcap_{j\in J} \{ e_j\}^{\perp}\right) \cap S(C_p(H)) \right) = \left(\displaystyle \bigcap_{j\in J} \{ \Delta(e_j)\}^{\perp}\right) \cap S(C_p(H'))\cong S(C_p(H_1)).$$ By restricting $\Delta$ to $\displaystyle \left(\bigcap_{j\in J} \{ e_j\}^{\perp}\right) \cap S(C_p(H))$ we can assume that $H=H'= H_1$ is a two dimensional complex Hilbert space, and $\Delta : S(C_p(\mathbb{C}^2))\to S(C_p(\mathbb{C}^2))$ is a surjective isometry.\smallskip

By Theorem \ref{t Tingley's for Cp with H 2dimensional} there exists a surjective real linear isometry $T: C_p(\mathbb{C}^2)\to C_p(\mathbb{C}^2)$ whose restriction to $S(C_p(\mathbb{C}^2))$ is $\Delta$. It is known that, in this case, there exist unitaries $u,v\in M_2(\mathbb{C})$ such that one of the following statements holds:\begin{enumerate}[$(a)$] \item $T(x) = u x v$, for every $x\in C_p(\mathbb{C}^2)$;
\item $T(x) = u x^t v$, for every $x\in C_p(\mathbb{C}^2)$;
\item $T(x) = u \overline{x} v$, for every $x\in C_p(\mathbb{C}^2)$;
\item $T(x) = u x^* v$, for every $x\in C_p(\mathbb{C}^2)$,
\end{enumerate} where $\overline{(x_{ij})} = (\overline{x_{ij}})$ (just combine Proposition \ref{p minimal partial isometries uniformly homogeneous} and \cite[Theorem 11.2.3]{FleJam08}). Under these circumstances, it is a routine exercise to check that the desired conclusions hold.
\end{proof}

Combining Propositions \ref{p minimal partial isometries uniformly homogeneous} and \ref{p minimal partial isometries relative position} we get the following corollary.

\begin{corollary}\label{c prop 2.8 linear or conjugate linear} Let $H$ and $H'$ be complex Hilbert spaces, let $ p \in(1,\infty)\backslash\{2\}$, and let $\Delta: S(C_p(H))\to S(C_p(H'))$ be a surjective isometry. Suppose $e$ and $v$ are two minimal partial isometries in $S(C_p(H))$.
Then one of the following statements hold:\begin{enumerate}[$(a)$]\item If $\Delta (\lambda w) = \lambda \Delta(w)$ for every $\lambda\in \mathbb{T}$ and every $w\in \mathcal{U}_{min} (H),$ then $$P_2(\Delta(e)) (\Delta(v))=  \varphi_e (v ) \Delta(e),$$ equivalently, $ \varphi_{\Delta(e)} (\Delta(v) ) =  \varphi_e (v )$;
\item If $\Delta (\lambda w) = \overline{\lambda} \Delta(w)$ for every $\lambda\in \mathbb{T}$ and every $w\in \mathcal{U}_{min} (H),$ then $$P_2(\Delta(e)) (\Delta(v))= \overline{\varphi_e (v )} \Delta(e),$$ equivalently, $\varphi_{\Delta(e)} (\Delta(v) ) =  \overline{\varphi_e (v )}$.
\end{enumerate}
\end{corollary}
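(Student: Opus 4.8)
The plan is to combine the pointwise dichotomy furnished by Proposition~\ref{p minimal partial isometries relative position} with the global rigidity of Proposition~\ref{p minimal partial isometries uniformly homogeneous}, exploiting the extra information obtained by testing the element $i v$ in addition to $v$. First recall that, by Proposition~\ref{p minimal partial isometries in arbitrary dimension}$(a)$, $\Delta(e)$ is again a minimal partial isometry, so $\varphi_{\Delta(e)}$ is well defined and the two formulations in each of $(a)$ and $(b)$ are equivalent, since $P_2(\Delta(e))(x)=\varphi_{\Delta(e)}(x)\,\Delta(e)$ with $\Delta(e)\neq 0$. Observe also that $i v\in\mathcal{U}_{min} (H)$ whenever $v\in\mathcal{U}_{min} (H)$, and that $\varphi_e$ and $\varphi_{\Delta(e)}$ are complex linear, being the coordinate functionals of the linear Peirce $2$-projections on the one-dimensional spaces $\mathbb{C}e$ and $\mathbb{C}\Delta(e)$; in particular $\varphi_e(i v)=i\,\varphi_e(v)$.

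Suppose first that we are in situation $(a)$, that is, $\Delta(\lambda w)=\lambda\,\Delta(w)$ for all $\lambda\in\mathbb{T}$ and all $w\in\mathcal{U}_{min} (H)$. If $\varphi_e(v)\in\mathbb{R}$ there is nothing to prove, because Proposition~\ref{p minimal partial isometries relative position} already forces $\varphi_{\Delta(e)}(\Delta(v))\in\{\varphi_e(v),\overline{\varphi_e(v)}\}=\{\varphi_e(v)\}$. So assume $\varphi_e(v)\notin\mathbb{R}$, hence $\varphi_e(v)\neq 0$. Applying Proposition~\ref{p minimal partial isometries relative position} to the pair $(e,v)$ and to the pair $(e,i v)$ we obtain $\varphi_{\Delta(e)}(\Delta(v))\in\{\varphi_e(v),\overline{\varphi_e(v)}\}$ and $\varphi_{\Delta(e)}(\Delta(i v))\in\{i\,\varphi_e(v),\,-i\,\overline{\varphi_e(v)}\}$; on the other hand, the hypothesis of $(a)$ and the linearity of $\varphi_{\Delta(e)}$ give $\varphi_{\Delta(e)}(\Delta(i v))=\varphi_{\Delta(e)}\big(i\,\Delta(v)\big)=i\,\varphi_{\Delta(e)}(\Delta(v))$.

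Arguing by contradiction, if $\varphi_{\Delta(e)}(\Delta(v))=\overline{\varphi_e(v)}$ then $i\,\overline{\varphi_e(v)}=\varphi_{\Delta(e)}(\Delta(i v))$ must equal either $i\,\varphi_e(v)$ (forcing $\varphi_e(v)=\overline{\varphi_e(v)}$) or $-i\,\overline{\varphi_e(v)}$ (forcing $\varphi_e(v)=0$); both are impossible, so $\varphi_{\Delta(e)}(\Delta(v))=\varphi_e(v)$, which is the statement in $(a)$. In situation $(b)$ one repeats the argument with $\Delta(i v)=\overline{i}\,\Delta(v)=-i\,\Delta(v)$, so that $\varphi_{\Delta(e)}(\Delta(i v))=-i\,\varphi_{\Delta(e)}(\Delta(v))$; the possibility $\varphi_{\Delta(e)}(\Delta(v))=\varphi_e(v)$ then yields $\varphi_e(v)=0$ or $\varphi_e(v)=\overline{\varphi_e(v)}$, and we are left with $\varphi_{\Delta(e)}(\Delta(v))=\overline{\varphi_e(v)}$.

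I do not expect a genuine obstacle: the only point requiring care is the bookkeeping of which of the two values permitted by Proposition~\ref{p minimal partial isometries relative position} is compatible with the prescribed behaviour of $\Delta$ on $\mathbb{T}v$, and this is precisely what is resolved by testing $i v$ as well. If a more structural proof is preferred, one may instead return to the two-dimensional reduction employed in the proof of Proposition~\ref{p minimal partial isometries relative position}: in case $(a)$ the real linear isometry $T$ of $C_p(\mathbb{C}^2)$ extending $\Delta$ must be complex linear (the conjugate linear forms would contradict $\Delta(i v)=i\,\Delta(v)$), hence of the form $x\mapsto uxv$ or $x\mapsto ux^{t}v$, while in case $(b)$ it must be conjugate linear, hence of the form $x\mapsto u\overline{x}v$ or $x\mapsto ux^{*}v$; as each of these four maps is a (conjugate) triple isomorphism, it intertwines the Peirce $2$-projections, so $P_2(\Delta(e))(\Delta(v))=T\big(P_2(e)(v)\big)=T\big(\varphi_e(v)\,e\big)$ equals $\varphi_e(v)\Delta(e)$ in case $(a)$ and $\overline{\varphi_e(v)}\,\Delta(e)$ in case $(b)$.
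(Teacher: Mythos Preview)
Your proof is correct and follows essentially the same route as the paper's own argument: invoke Proposition~\ref{p minimal partial isometries relative position} for the pair $(e,v)$, then test the pair $(e,iv)$ using the global homogeneity from Proposition~\ref{p minimal partial isometries uniformly homogeneous} together with the complex linearity of $\varphi_{\Delta(e)}$, and observe that the ``wrong'' branch forces $\varphi_e(v)\in\mathbb{R}$ (equivalently, leads to a contradiction when $\varphi_e(v)\notin\mathbb{R}$). The alternative structural argument you sketch at the end---returning to the two-dimensional reduction and reading off the answer from the explicit form of $T$---is a valid shortcut not spelled out in the paper, but your primary argument already matches the paper's.
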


\begin{proof} We shall only prove statement $(a)$, the proof of $(b)$ is analogous. We therefore assume that $\Delta (\lambda w) = \lambda \Delta(w)$ for every $\lambda\in \mathbb{T}$ and every $w\in \mathcal{U}_{min} (H).$ Proposition \ref{p minimal partial isometries relative position} implies that $ P_2(\Delta(e)) (\Delta(v))\in \{ \varphi_e (v ) \Delta(e) , \overline{\varphi_e (v )} \Delta(e)\}.$ If $P_2(\Delta(e)) (\Delta(v)) = {\varphi_e (v )} \Delta(e)$ there is nothing to prove. Suppose that $P_2(\Delta(e)) (\Delta(v)) = \overline{\varphi_e (v )} \Delta(e)\neq 0.$ By assumptions $\Delta(i v) = i \Delta(v)$, and by Proposition \ref{p minimal partial isometries relative position} we have $$i \overline{\varphi_e (v )} \Delta(e)= P_2(\Delta(e)) (\Delta(i v))\in \{ \varphi_e (i v ) \Delta(e) , \overline{\varphi_e (i v )} \Delta(e)\},$$ which proves that $\varphi_e (v )\in \mathbb{R}$, and hence $P_2(\Delta(e)) (\Delta(v)) = {\varphi_e (v )} \Delta(e).$

\end{proof}

We are now in position to reveal a connection with the celebrated Wigner theorem. Let Proj$(H)$ denote the lattice of all projections on
a Hilbert space $H$ equipped with the usual partial ordering, and let $\hbox{Proj}_1(H)$ stand for the set of minimal (rank-one) projections on $H$. We recall that a conjugate-linear norm preserving bijection on $H$ is called an \emph{antiunitary operator}. Wigner's  unitary-antiunitary theorem reads as follows:

\begin{theorem}\label{Wigner thm}\rm{(Wigner's theorem \cite{CasVitLahLevr1997})} If $F : \hbox{Proj}_1(H)\to \hbox{Proj}_1(H)$ is a bijective function which preserves the transition probabilities, that is, $$\hbox{tr} \left(F(p) F(q) \right)= \hbox{tr} (pq), (p, q \in \hbox{Proj}_1(H)),$$
then there is an either unitary or antiunitary operator $u$ on $H$ such that $F$ is of the form $F(p) = u p u,$ for all $p\in \hbox{Proj}_1(H)$. $\hfill\Box$
\end{theorem}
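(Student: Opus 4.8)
The plan is to reconstruct an (anti)unitary operator on $H$ from the action of $F$ on a fixed orthonormal basis and then to verify the asserted formula on all of $\hbox{Proj}_1(H)$. For a unit vector $\xi\in H$ write $p_\xi=\xi\otimes\xi$ for the associated rank-one projection, so that $\hbox{tr}(p_\xi p_\eta)=|\langle\xi|\eta\rangle|^2$. First I would fix an orthonormal basis $\{e_i\}_{i\in I}$ of $H$ and put $q_i:=F(p_{e_i})$. Since $\hbox{tr}(p_{e_i}p_{e_j})=\delta_{ij}$ and $F$ preserves transition probabilities, the $q_i$ are mutually orthogonal rank-one projections; write $q_i=f_i\otimes f_i$ for unit vectors $f_i$. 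Using that $F$ is a bijection one checks that $\{f_i\}_{i\in I}$ is again an orthonormal basis: a unit vector orthogonal to every $f_i$ would, via $F^{-1}$, give a rank-one projection orthogonal to each $p_{e_i}$, which is impossible.

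Next I would normalize the phases of the $f_i$. For $j\neq 1$ the vector $\frac{1}{\sqrt2}(e_1+e_j)$ has transition probability $\frac12$ with $p_{e_1}$ and with $p_{e_j}$ and $0$ with the remaining $p_{e_k}$, so $F(p_{\frac{1}{\sqrt2}(e_1+e_j)})=p_{\eta}$ with $\eta=\frac{1}{\sqrt2}(f_1+c_j f_j)$ for some $c_j\in\mathbb{T}$, the residual global phase being fixed so that the $f_1$-coefficient is positive. Replacing $f_j$ by $\overline{c_j}f_j$, which leaves $q_j$ unchanged, we may assume $F(p_{\frac{1}{\sqrt2}(e_1+e_j)})=p_{\frac{1}{\sqrt2}(f_1+f_j)}$ for all $j\neq 1$. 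Now let $\xi=\sum_i\xi_ie_i$ be an arbitrary unit vector; from $\hbox{tr}(p_\xi p_{e_i})=|\xi_i|^2$ we get $F(p_\xi)=p_\eta$ with $\eta=\sum_i\eta_if_i$ and $|\eta_i|=|\xi_i|$. Equating the transition probabilities of $p_\xi$ with the projections just normalized gives $\operatorname{Re}(\eta_1\overline{\eta_j})=\operatorname{Re}(\xi_1\overline{\xi_j})$ for all $j$; repeating with the test vectors $\frac{1}{\sqrt2}(e_1+ie_j)$ gives $\operatorname{Im}(\eta_1\overline{\eta_j})=\varepsilon_j\operatorname{Im}(\xi_1\overline{\xi_j})$ with $\varepsilon_j\in\{+1,-1\}$, and inspecting a triple of indices on which $\xi$ has three nonzero coordinates forces all the $\varepsilon_j$ to agree. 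Hence, after normalizing so that $\xi_1,\eta_1>0$, either $\eta_i=\xi_i$ for all $i$ or $\eta_i=\overline{\xi_i}$ for all $i$.

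It then remains to show that this dichotomy is the same for every $\xi$: if two unit vectors gave opposite outcomes, one reaches a contradiction by computing $\hbox{tr}(F(p_\xi)F(p_{\xi'}))=\hbox{tr}(p_\xi p_{\xi'})$ for a pair with non-real overlap $\langle\xi|\xi'\rangle$. Consequently exactly one of the rules $\xi_i\mapsto\xi_i$, $\xi_i\mapsto\overline{\xi_i}$ holds uniformly. I would then define $u$ on $H$ by $ue_i:=f_i$, extended linearly in the first case and conjugate-linearly in the second; $u$ is then a norm-preserving bijection of $H$, that is, unitary or antiunitary, and the description of $\eta$ above says exactly that $F(p_\xi)=p_{u\xi}=u\,p_\xi\,u^*$ for every unit $\xi$, which is the form asserted in the statement.

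The substantive part of the argument is the phase analysis of the general vector together with the proof that the unitary/antiunitary alternative is globally consistent; the rest is bookkeeping with orthonormal bases and transition probabilities. Finally, the cases $\dim H=1$ (trivial) and $\dim H=2$ require separate treatment, the latter because there are no triples of mutually orthogonal basis vectors available to pin down the relative phases, so one argues directly on the two-dimensional projective space instead.
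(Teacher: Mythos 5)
The paper does not prove this statement at all: it is quoted verbatim as Wigner's classical unitary--antiunitary theorem with a citation to Cassinelli--de Vito--Lahti--Levrero, and the terminal $\Box$ indicates that no argument is supplied. So there is nothing in the paper to compare your proof against; what you have written is a from-scratch sketch of the standard Bargmann-style proof (orthonormal basis $\{e_i\}$, image basis $\{f_i\}$, phase normalization via the superpositions $\tfrac{1}{\sqrt2}(e_1+e_j)$ and $\tfrac{1}{\sqrt2}(e_1+ie_j)$, coordinatewise identity-or-conjugation dichotomy, global consistency, then definition of $u$). That outline is correct and is indeed how the theorem is usually proved.

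Two steps in your sketch carry essentially all of the difficulty and are treated too lightly. First, the global consistency of the dichotomy: for a \emph{given} pair $\xi,\xi'$ exhibiting opposite behaviours, the single identity $|\langle\eta|\eta'\rangle|=|\langle\xi|\xi'\rangle|$ reduces in coordinates to $\bigl|\sum_i\overline{\xi_i}\,\overline{\xi'_i}\bigr|=\bigl|\sum_i\overline{\xi_i}\,\xi'_i\bigr|$, which can hold accidentally; you cannot simply ``choose a pair with non-real overlap'' because the pair is handed to you. The standard fix is to test both vectors against a third, suitably chosen, auxiliary unit vector (or to run a connectedness argument on the set of vectors obeying each rule), and this needs to be written out. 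Second, the case $\dim H=2$ is not a footnote: the entire phase-normalization scheme collapses there, and the two-dimensional Wigner theorem is a genuinely separate result (usually proved by identifying $\hbox{Proj}_1(\mathbb{C}^2)$ with the $2$-sphere and classifying its distance-preserving bijections). Saying one ``argues directly on the two-dimensional projective space'' is naming the problem, not solving it. Since the paper treats the theorem as a black box, the cleanest course is to do the same and cite the reference; if you do want a self-contained proof, these are the two places where the real work lies.
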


We refer to \cite{Mol99,Mol2000} and \cite{Mol2002} for recent generalizations of Wigner's theorem. We are interested in a concrete extension established by L. Moln{\'a}r in \cite{Mol2002}. In the just quoted paper, Moln{\'a}r replaces the set, $\hbox{Proj}_1(H),$ of minimal projections on $H$ with the strictly wider set, $\mathcal{U}_{min} (H),$ of minimal partial isometries in $B(H)$ and determines the bijections on $\mathcal{U}_{min} (H)$ preserving the transition probabilities.

\begin{theorem}\label{Wigner thm min pi}\rm{\cite[Theorem 2]{Mol2002}} Let $F : \mathcal{U}_{min} (H)\to \mathcal{U}_{min} (H)$ be a bijective function preserving the transition probabilities, that is, $$\hbox{tr} \left(F(e)^* F(v)\right) = \hbox{tr} (e^* v), \hbox{ for all } e, v \in \mathcal{U}_{min} (H),$$
then $F$ is of one of the following forms:
\begin{enumerate}[$(a)$]\item There exist unitaries $\widehat{u},\widehat{v}$ on $H$ such that $F(e) = \widehat{u} e \widehat{v},$ for all $e\in \mathcal{U}_{min} (H)$, that is, $F$ coincides with the restriction to $\mathcal{U}_{min} (H)$ of a complex linear bijection on $B(H)$ preserving triple products of the form $\{a,b,c\}= \frac12 ( a b^* c+ cb^* a)$;
\item There exist antiunitaries $\widehat{u},\widehat{v}$ on $H$ such that $F(e) = \widehat{u} e^* \widehat{v},$ for all $e\in \mathcal{U}_{min} (H),$ that is, $F$ coincides with the restriction to $\mathcal{U}_{min} (H)$ of a complex linear bijection on $B(H)$ preserving triple products of the form $\{a,b,c\}= \frac12 ( a b^* c+ cb^* a)$. $\hfill\Box$
\end{enumerate}
\end{theorem}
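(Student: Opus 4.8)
The plan is to reduce the statement to linear algebra by exploiting the Hilbert--Schmidt structure. Every minimal partial isometry is a rank-one operator, hence lies in $C_2(H)$, so $\mathcal{U}_{min}(H)\subseteq S(C_2(H))$; moreover, for $e,v\in\mathcal{U}_{min}(H)$ the transition probability $\hbox{tr}(e^*v)$ is precisely the Hilbert--Schmidt inner product $\langle e,v\rangle_2:=\hbox{tr}(e^*v)$. Thus the hypothesis says exactly that $F$ preserves $\langle\cdot,\cdot\rangle_2$ on $\mathcal{U}_{min}(H)$. I would first extend $F$ to a surjective linear isometry (that is, a unitary) $G$ of $C_2(H)$, then show that $G$ carries rank-one operators to rank-one operators, and finally invoke the classical description of rank-one preserving linear bijections to obtain $G(x)=\widehat u x\widehat v$ with $\widehat u,\widehat v$ unitary, or $G(x)=\widehat u x^*\widehat v$ with $\widehat u,\widehat v$ antiunitary; restricting $G$ to $\mathcal{U}_{min}(H)$ then recovers $F$ in one of the two stated forms. (The case $\dim(H)=1$ is immediate: $\mathcal{U}_{min}(H)=\mathbb{T}$, and the hypothesis forces $F(\mu)=F(1)\mu$, which is of form $(a)$.)

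For the extension, fix an orthonormal basis $\{\varepsilon_i\}_{i\in I}$ of $H$ and let $E_{ij}=\varepsilon_i\otimes\varepsilon_j$ be the corresponding matrix units; these are minimal partial isometries and form an orthonormal basis of $C_2(H)$. Then $\{F(E_{ij})\}_{i,j\in I}$ is an orthonormal system in $C_2(H)$. Using surjectivity of $F$, each $v\in\mathcal{U}_{min}(H)$ equals $F(w)$ for some $w\in\mathcal{U}_{min}(H)$, and Parseval's identity gives $1=\|w\|_2^2=\sum_{i,j}|\langle E_{ij},w\rangle_2|^2=\sum_{i,j}|\langle F(E_{ij}),v\rangle_2|^2$, so $v$ lies in the closed linear span of $\{F(E_{ij})\}$. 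Since the minimal partial isometries have dense linear span in $C_2(H)$, the system $\{F(E_{ij})\}$ is total, hence an orthonormal basis. Setting $G(x):=\sum_{i,j}\langle E_{ij},x\rangle_2\,F(E_{ij})$ defines a unitary of $C_2(H)$, and for $e\in\mathcal{U}_{min}(H)$ one has $\langle F(E_{ij}),G(e)\rangle_2=\langle E_{ij},e\rangle_2=\langle F(E_{ij}),F(e)\rangle_2$ for all $i,j$, whence $G(e)=F(e)$.

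It remains to classify the unitaries $G$ of $C_2(H)$ with $G(\mathcal{U}_{min}(H))=\mathcal{U}_{min}(H)$. Since the Hilbert--Schmidt and operator norms coincide on rank-one operators, $\mathcal{U}_{min}(H)$ is exactly the set of rank-one operators in $S(C_2(H))$; as $G$ is linear and preserves this set in both directions, $G$ preserves the rank-one operators. By the classical (Marcus--Moyls type) description of rank-one preserving linear bijections, $G(x)=AxB$ or $G(x)=Ax^{t}B$ for some linear bijections $A,B$ of $H$, where $x\mapsto x^{t}$ is the transpose relative to $\{\varepsilon_i\}$; the requirement that $G$ be a Hilbert--Schmidt isometry forces $A$ and $B$ to be unitary. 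The first alternative reads $G(x)=\widehat u x\widehat v$ with $\widehat u,\widehat v$ unitary, and, writing $x^{t}=Jx^{*}J$ for the conjugation $J$ associated to $\{\varepsilon_i\}$, the second becomes $G(x)=(AJ)\,x^{*}\,(JB)=\widehat u x^{*}\widehat v$ with $\widehat u,\widehat v$ antiunitary. Restricting to $\mathcal{U}_{min}(H)$ finishes the argument. The only substantive ingredient is the rank-one preserver classification for the linear map $G$ --- an analogue, for operators rather than rank-one projections, of Wigner's theorem (Theorem~\ref{Wigner thm}); the reduction to such a $G$ carried out in the second paragraph is precisely what makes this classical machinery applicable, and everything else above is soft.
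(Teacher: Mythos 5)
This theorem is not proved in the paper at all: it is quoted verbatim from Moln\'ar \cite[Theorem 2]{Mol2002} and used as a black box (note the $\Box$ closing the statement). Your argument is therefore not an alternative to an in-paper proof but a self-contained substitute for the citation, and as such it is essentially correct. The decisive observation is that the hypothesis, as quoted here, preserves the complex number $\hbox{tr}(e^*v)=\langle v,e\rangle_2$ exactly rather than merely its modulus; this is precisely what makes your Parseval argument work and lets you sidestep the genuinely Wigner-theoretic difficulties (phase ambiguities) that a modulus-only hypothesis would create. The extension of $F$ to a complex-linear Hilbert--Schmidt unitary $G$ is clean and complete, and identifying $\mathcal{U}_{min}(H)$ with the norm-one rank-one operators in $C_2(H)$ correctly reduces everything to a rank-one preserver problem; note also that both target forms, including $x\mapsto \widehat{u}x^*\widehat{v}$ with $\widehat{u},\widehat{v}$ antiunitary, are indeed complex linear, so no generality is lost by seeking a linear $G$. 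Two points deserve more care. First, Marcus--Moyls is a finite-dimensional theorem; for arbitrary $H$ you should either invoke an infinite-dimensional rank-one preserver result (e.g.\ Omladi\v{c}--\v{S}emrl) or observe that the standard argument goes through: $G$ permutes the maximal linear subspaces consisting of operators of rank at most one, which are exactly the ``rows'' $\{\eta\otimes\xi:\xi\in H\}$ and ``columns'' $\{\eta\otimes\xi:\eta\in H\}$, and a bijective rank-one preserver either preserves or interchanges these two families globally. Second, the isometry condition forces $\|A\eta\|=c\|\eta\|$ and $\|B^*\xi\|=c^{-1}\|\xi\|$ for some $c>0$, so $A$ and $B$ are unitary only after absorbing reciprocal positive scalars. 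Neither point is a gap, only a matter of precision; your route is arguably more elementary than Moln\'ar's, at the cost of using the full strength of the exact (non-modulus) form of the hypothesis, which is in any case all that the paper ever verifies when it applies the theorem.
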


If  $\Delta: S(C_p(H))\to S(C_p(H'))$ is a surjective isometry, where $H$ and $H'$ are complex Hilbert spaces, we deduce from Lemma \ref{l preservation of orthogonality} that $H$ and $H'$ are isometrically isomorphic. We can therefore restrict our study to the case in which $H=H'$.\smallskip

We can now establish our main result.

\begin{theorem}\label{t Tingley's for Cp with 2<p} Let $H$ be a complex Hilbert space, let
$p\in (1, \infty)\backslash\{2\}$ be a fixed real number, and let $\Delta: S(C_p(H))\to S(C_p(H))$ be a surjective isometry.
Then there exists a complex linear or a conjugate linear surjective isometry $T:C_p(H)\to C_p(H)$ whose restriction
to $S(C_p(H))$ is $\Delta$.
\end{theorem}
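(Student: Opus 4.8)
The plan is: first normalise $\Delta$ so that it is complex homogeneous on the minimal partial isometries, then use the rigidity already accumulated to deduce that $\Delta$ preserves the transition probabilities on $\mathcal{U}_{min}(H)$, invoke Moln\'ar's Wigner-type theorem to manufacture a complex linear candidate extension $T_0$, and finally show that $\Delta$ and $T_0$ already coincide on the whole sphere via the identity principle of Proposition \ref{p a la Nagy}. We may assume $\dim(H)\geq 2$ (for $\dim(H)=1$ the statement is trivial, and for $\dim(H)=2$ it is Theorem \ref{t Tingley's for Cp with H 2dimensional}, although the argument below covers all $\dim(H)\geq 2$ uniformly). By Proposition \ref{p minimal partial isometries uniformly homogeneous} exactly one of the alternatives $\Delta(\lambda v)=\lambda\Delta(v)$ or $\Delta(\lambda v)=\overline{\lambda}\,\Delta(v)$ holds for all $\lambda\in\mathbb{T}$ and all $v\in\mathcal{U}_{min}(H)$. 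In the second case I would replace $\Delta$ by the map $x\mapsto\Delta(x)^{*}$, which is again a surjective isometry of $S(C_p(H))$ (since $\|x^{*}\|_p=\|x\|_p$) and now falls under the first alternative; any complex linear extension $T_0'$ obtained in this situation yields, after composing again with $x\mapsto x^{*}$, the conjugate linear extension $x\mapsto T_0'(x)^{*}$ of the original $\Delta$. So from now on I assume $\Delta(\lambda v)=\lambda\Delta(v)$ for all $\lambda\in\mathbb{T}$ and $v\in\mathcal{U}_{min}(H)$.

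Next I would record that, for minimal partial isometries $e$ and $v$, the scalar $\varphi_e(v)$ determined by $P_2(e)(v)=ee^{*}ve^{*}e=\varphi_e(v)e$ equals $\hbox{tr}(e^{*}v)$ --- a one-line computation with rank-one operators. Corollary \ref{c prop 2.8 linear or conjugate linear}$(a)$ then gives $\varphi_{\Delta(e)}(\Delta(v))=\varphi_e(v)$ for all $e,v\in\mathcal{U}_{min}(H)$, i.e. $\hbox{tr}(\Delta(e)^{*}\Delta(v))=\hbox{tr}(e^{*}v)$; and by Corollary \ref{c minimal partial isometries in arbitrary dimension} the restriction $\Delta|_{\mathcal{U}_{min}(H)}$ is a bijection of $\mathcal{U}_{min}(H)$. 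Hence Theorem \ref{Wigner thm min pi} applies and produces a complex linear bijection $T_0$ of $B(H)$ (of the form $x\mapsto\widehat{u}x\widehat{v}$ with $\widehat{u},\widehat{v}$ unitary, or of the form $x\mapsto\widehat{u}x^{*}\widehat{v}$ with $\widehat{u},\widehat{v}$ antiunitary) whose restriction to $\mathcal{U}_{min}(H)$ coincides with $\Delta$; in either form $T_0$ restricts to a surjective complex linear isometry of $C_p(H)$, because conjugation by (anti)unitaries and the involution $*$ all preserve $\|\cdot\|_p$.

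Now set $\Delta_1:=T_0^{-1}\circ\Delta$, a surjective isometry of $S(C_p(H))$ fixing $\mathcal{U}_{min}(H)$ pointwise; I claim $\Delta_1$ is the identity, which completes the proof with $T=T_0$ (and with $T:x\mapsto T_0(x)^{*}$ if the first-paragraph reduction was performed). For every $a\in S(C_p(H))$ and every $e\in\mathcal{U}_{min}(H)$ one has $\|\Delta_1(a)-e\|_p=\|\Delta_1(a)-\Delta_1(e)\|_p=\|a-e\|_p$. When $\dim(H)<\infty$, Proposition \ref{p a la Nagy} applied with $\gamma=1$ immediately gives $\Delta_1(a)=a$. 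For arbitrary $H$, since finite rank operators are $\|\cdot\|_p$-dense and $\Delta_1$ is continuous, it suffices to treat a finite rank $a\in S(C_p(H))$; writing its singular value decomposition $a=\sum_{j=1}^{n}\lambda_j\,\eta_j\otimes\xi_j$, I complete $\{\eta_j\}_{j\leq n}$ and $\{\xi_j\}_{j\leq n}$ to orthonormal bases $\{\zeta_j\}_{j\in I}$, $\{\omega_j\}_{j\in I}$ of $H$ with $\zeta_j=\eta_j$, $\omega_j=\xi_j$ for $j\leq n$, and put $e_j:=\zeta_j\otimes\omega_j\in\mathcal{U}_{min}(H)$, $F:=\{1,\dots,n\}$. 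As in Remark \ref{remark on a couple of minimal partial isometries}, $\bigcap_{j\in I\setminus F}\{e_j\}^{\perp}\cap C_p(H)=PC_p(H)Q$, where $P,Q$ are the rank-$n$ projections onto $\hbox{span}\{\eta_j\}_{j\leq n}$ and $\hbox{span}\{\xi_j\}_{j\leq n}$; this is a finite dimensional copy of $C_p(\mathbb{C}^{n})$ containing $a$. Since $\Delta_1$ fixes each $e_j$ with $j\in I\setminus F$ and preserves orthogonality in both directions (Lemma \ref{l preservation of orthogonality}), it restricts to a surjective isometry of $S(PC_p(H)Q)$ that still fixes pointwise the minimal partial isometries of this corner, so the finite dimensional case already treated yields $\Delta_1(a)=a$.

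The main obstacle is the middle step: in order to invoke Wigner/Moln\'ar's theorem one must know that $\Delta$, after the normalisation of the first paragraph, preserves the transition probabilities $\hbox{tr}(e^{*}v)$ among minimal partial isometries, and this is exactly what the chain of Propositions \ref{p minimal partial isometries uniformly homogeneous}--\ref{p minimal partial isometries relative position} together with Corollary \ref{c prop 2.8 linear or conjugate linear} --- all built on the two dimensional Theorem \ref{t Tingley's for Cp with H 2dimensional} --- is engineered to supply. Once that is in hand, the rigidity provided by the identity principle in Proposition \ref{p a la Nagy}, combined with the reduction to finite dimensional corners in the infinite dimensional setting, leaves no room between $\Delta$ and its linear candidate $T_0$.
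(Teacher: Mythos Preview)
Your proof is correct and follows essentially the same route as the paper's: normalise via Proposition \ref{p minimal partial isometries uniformly homogeneous}, use Corollary \ref{c prop 2.8 linear or conjugate linear} to obtain preservation of transition probabilities, apply Moln\'ar's Theorem \ref{Wigner thm min pi} to produce the linear candidate, and finish with Proposition \ref{p a la Nagy} on finite dimensional corners plus density. The only cosmetic difference is that for alternative $(b)$ you reduce by composing with the involution $x\mapsto x^{*}$, whereas the paper composes with a fixed conjugation $\overline{\,\cdot\,}$ on $B(H)$; both are conjugate linear $\|\cdot\|_p$-isometries, so either choice works.
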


\begin{proof} We deduce from Corollary \ref{c minimal partial isometries in arbitrary dimension} that the restricted mapping $F=\Delta|_{_{\mathcal{U}_{min} (H)}}: \mathcal{U}_{min} (H)\to \mathcal{U}_{min} (H)$ is a surjective isometry. Corollary \ref{p minimal partial isometries uniformly homogeneous} assures that one of the following statements holds:\begin{enumerate}[$(a)$]\item $\Delta (\lambda v) = \lambda \Delta(v)$ for every $\lambda\in \mathbb{T}$ and every $v\in \mathcal{U}_{min} (H);$
\item $\Delta (\lambda v) = \overline{\lambda} \Delta(v)$ for every $\lambda\in \mathbb{T}$ and every $v\in \mathcal{U}_{min} (H).$
\end{enumerate}

Let us assume that $(a)$ holds. Corollary \ref{c prop 2.8 linear or conjugate linear} tells that $P_2(\Delta(e)) (\Delta(v))=  \varphi_e (v ) \Delta(e),$ equivalently, $ \varphi_{\Delta(e)} (\Delta(v) ) =  \varphi_e (v )$, for every $e,v\in \mathcal{U}_{min} (H).$ It is a routine exercise to check that in this case $\hbox{tr} (\Delta(e)^* \Delta(v)) = \hbox{tr} (e^* v)$, for every $e,v\in \mathcal{U}_{min} (H).$ Moln{\'a}r's theorem (see Theorem \ref{Wigner thm min pi}) combined with our hypothesis assure the existence of two unitaries (respectively, two antiunitaries) $\widehat{u},\widehat{v}$ on $H$ such that \begin{equation}\label{eq coincidence on min pi 1502} \Delta(e) = \widehat{u}\ e\ \widehat{v}, \hbox{(respectively, } \Delta(e) = \widehat{u}\ e^*\ \widehat{v}{\rm)},
\end{equation} for all $e \in \mathcal{U}_{min} (H)$. We define a surjective isometry $\Delta_1 : S(C_p(H))\to S(C_p(H))$ given by $\Delta_1 (x) = \widehat{u}^*\ \Delta(x) \ \widehat{v}^*$ (respectively, $\Delta_1 (x) = \widehat{v}\ \Delta(x)^* \ \widehat{u}$). It follows from \eqref{eq coincidence on min pi 1502} that $\Delta_1(e) = e,$ for all $e \in \mathcal{U}_{min} (H)$.\smallskip

Fix a finite rank operator $a\in S( C_p(H)),$ and let us pick a family of mutually orthogonal minimal partial isometries $\{e_j:j\in J\}\subset S( C_p(H))$ such that $\displaystyle a\in S\left(\displaystyle \bigcap_{j\in J} \{ e_j\}^{\perp}\right) \cong S(C_p(H_1))\subset S(C_p(H)),$ where $H_1$ is a finite dimensional complex Hilbert space. By Lemma \ref{l preservation of orthogonality} and Proposition \ref{p minimal partial isometries in arbitrary dimension} we get $$\Delta_1\left(  \left(\displaystyle \bigcap_{j\in J} \{ e_j\}^{\perp}\right) \cap S(C_p(H)) \right) = \left(\displaystyle \bigcap_{j\in J} \{ \Delta_1(e_j)\}^{\perp}\right) \cap S(C_p(H))\cong S(C_p(H_1)).$$ By restricting $\Delta_1$ to $\displaystyle \left(\bigcap_{j\in J} \{ e_j\}^{\perp}\right) \cap S(C_p(H))$ we can assume that $H= H_1$ is a finite dimensional complex Hilbert space, and $\Delta_1 : (M_m(\mathbb{C}), \|.\|_p)\to (M_m(\mathbb{C}), \|.\|_p)$ is a surjective isometry for a suitable natural $m$.\smallskip

Under these assumptions we have $$\|\Delta_1(a)- e \|_p=\|\Delta_1(a)-\Delta_1(e) \|_p= \| a- e \|_p,$$ for every $e \in \mathcal{U}_{min} (H)$.  Proposition \ref{p a la Nagy} now implies that $\Delta_1 (a) = a$.\smallskip

We have shown that $\Delta_1(a) = a,$ for every finite rank operator $a\in S( C_p(H)).$ Since finite rank elements in $S( C_p(H))$ are norm dense in $S( C_p(H)),$ we conclude from the continuity of $\Delta_1$ that $\Delta_1 (x) = x,$ for every $x\in S( C_p(H)),$ and consequently $\Delta (x) =  \widehat{u}\ x\ \widehat{v},$ for every $x\in S( C_p(H)).$\smallskip

We assume next that statement $(b)$ holds. Let us take a conjugate linear $^*$-automorphism of period-2 $\overline{\ \cdot\ }$ on $B(H)$ whose restriction to $C_p(H)$ defines a conjugate linear isometry of period-2 on the latter space. The mapping $\Delta_2 : S(C_p(H))\to S(C_p(H))$ given by $\Delta_2 (x) =  \overline{\Delta(x)} $ is a surjective isometry satisfying $$\Delta_2 (\lambda v)= \overline{\Delta (\lambda v)}= \overline{\overline{\lambda} \Delta (v)} = \lambda \overline{\Delta(v)}=\lambda \Delta_2 ( v)$$ for every $\lambda\in \mathbb{T}$ and every $v\in \mathcal{U}_{min} (H)$. Therefore, by applying the conclusion in the previous part, we deduce the existence of unitaries or antiunitaries $\widehat{u},\widehat{v}$ on $H$ such that $\Delta_2 (x) =  \widehat{u} x \widehat{v},$ for every $x\in S( C_p(H)).$ Consequently, $$ \Delta (x) = \overline{\Delta_2 (x)} = \overline{\widehat{u}\ x\ \widehat{v}} = \overline{\widehat{u}}\  \overline{x}\  \overline{\widehat{v}},$$ for every $x\in S( C_p(H)).$
\end{proof}

\medskip

\textbf{Acknowledgements} First and third authors partially supported by the Spanish Ministry of Economy and Competitiveness (MINECO) and European Regional Development Fund project no. MTM2014-58984-P and Junta de Andaluc\'{\i}a grant FQM375. Second author partially supported by the Spanish Ministry of Economy and Competitiveness Project MTM2013-43540-P.\smallskip

The authors are grateful to the referee for a careful reading of the paper, an outstanding report and valuable suggestions and comments. Our conclusions have been established in full generality thanks to sharp comments by the Referee.

\end{document}